\documentclass[12pt,reqno]{amsart}
\usepackage{amsmath,amsthm, amssymb}
\usepackage{mathrsfs}
\usepackage{graphicx}
\usepackage{mathtools}
\usepackage[update,prepend]{epstopdf}
\usepackage{epsfig}
\usepackage{color}
\usepackage{enumerate}

\textwidth160mm \textheight230mm\hoffset-2cm \voffset-7mm

\usepackage[font=small]{caption}

\mathtoolsset{showonlyrefs=false}
\numberwithin{equation}{section}
		
\newcommand{\dd}{{{\rm d}}}
\newcommand{\ii}{{\rm i}}
\newcommand{\PT}{{\mathcal{PT}}}

\newcommand{\ie}{{\emph{i.e.}}}
\newcommand{\eg}{{\emph{e.g.}}}

\newcommand{\FNLS}{F_{\text{NLS}}}

\def\eps{\varepsilon}

\def\ri{{\rm i}}

\def\util{\tilde{u}}
\def\ubartil{\tilde{\overline{u}}}
\def\vtil{\tilde{v}}
\def\vbartil{\tilde{\overline{v}}}
\def\uhat{\hat{u}}

\newcommand{\R}{\mathbb{R}}
\newcommand{\C}{\mathbb{C}}

\newcommand{\B}{\mathbb{B}}

\newcommand{\N}{\mathbb{N}}

\newcommand{\Z}{\mathbb{Z}}

\newcommand{\cB}{{\mathcal B}}

\newcommand{\cF}{{\mathcal F}}

\newcommand{\cX}{{\mathcal X}}

\renewcommand{\phi}{\varphi}

\newcommand{\pa}{\partial}

\newcommand{\supp}{\text{\rm supp}}

\newcommand{\bspm}{\left(\begin{smallmatrix}}\newcommand{\espm}{\end{smallmatrix}\right)}
\newcommand{\bpm}{\begin{pmatrix}}\newcommand{\epm}{\end{pmatrix}}

\def\blem{\begin{lemma}}\def\elem{\end{lemma}}
\def\bthm{\begin{theorem}}\def\ethm{\end{theorem}}
\def\bprop{\begin{proposition}}\def\eprop{\end{proposition}}
\def\bcor{\begin{corollary}}\def\ecor{\end{corollary}}
\def\beq{\begin{equation}}\def\eeq{\end{equation}}
\def\bpf{\begin{proof}}\def\epf{\end{proof}}
\def\bex{\begin{example}}\def\eex{\end{example}}
\def\brem{\begin{remark}}\def\erem{\end{remark}}

\begin{document}

\theoremstyle{plain}
\newtheorem{define}{Definition}
\newtheorem{theorem}{Theorem}
\newtheorem{lemma}[theorem]{Lemma}
\newtheorem{criterion}[theorem]{Criterion}
\newtheorem{proposition}[theorem]{Proposition}
\newtheorem{corollary}[theorem]{Corollary}
\renewcommand{\proofname}{Proof}

\theoremstyle{definition}
\newtheorem{example}[theorem]{Example}
\newtheorem{remark}[theorem]{Remark}
\newtheorem{ass}{Assumption}
\renewcommand\theass{{\upshape{(\Roman{ass})}}}
%


\title[Bifurcation of nonlinear bound states with $\PT$-symmetry]{Bifurcation of nonlinear bound states in the periodic Gross-Pitaevskii equation with $\PT$-symmetry}

\author{Tom\'a\v{s} Dohnal}
\address[Tom\'a\v{s} Dohnal]{
Fachbereich Mathematik, Technical University Dortmund,
Vogelpothsweg 87, 44221 Dortmund, Germany}
\email{tomas.dohnal@math.tu-dortmund.de}

\author{Dmitry Pelinovsky}
\address[Dmitry Pelinovsky]{Department of Mathematics, McMaster University, Hamilton, Ontario, Canada, L8S 4K1}
\email{dmpeli@math.mcmaster.ca}
\address{Department of Applied Mathematics,
Nizhny Novgorod State Technical University, 24 Minin street, 603950 Nizhny Novgorod, Russia}

\subjclass[2010]{47J10, 35P30, 81Q12}

\keywords{bifurcation, nonlinear bound states, spectral intervals, non-selfadjoint linear Schr\"{o}dinger operator, $\PT$-symmetry}

\date{\today}

\begin{abstract}
The stationary Gross--Pitaevskii equation in one dimension is considered with a complex periodic potential satisfying the conditions
of the $\PT$ (parity-time reversal) symmetry. Under rather general assumptions on the potentials we prove bifurcations
of $\PT$-symmetric nonlinear bound states from the end points of a real interval in the spectrum
of the non-selfadjoint linear Schr\"{o}dinger operator with a complex $\PT$-symmetric periodic potential.
The nonlinear bound states are approximated by the effective amplitude equation, which bears the form
of the cubic nonlinear Schr\"{o}dinger equation. In addition we provide sufficient conditions for the
appearance of complex spectral bands when the complex $\PT$-symmetric potential has an asymptotically small imaginary part.
\end{abstract}

\thanks{
The research was initiated during the LMS-Durham symposium on ``Mathematical and Computational Aspects of Maxwell's Equations'' in July 2016. The research of T.D. is partly supported by the \emph{German Research Foundation}, DFG grant No.  DO1467/3-1.
The research of D.P. is performed with financial support of the state task in the sphere of scientific activity of Russian Federation (Task No. 5.5176.2017/8.9).
}

\maketitle

\section{Introduction}\label{S:intro}

We consider the stationary Gross-Pitaevskii (GP) equation
\begin{equation}\label{E:nls}
-\frac{d^2 u}{dx^2} + V(x) u +\sigma(x)|u|^2u = \omega u, \quad x\in \R
\end{equation}
with complex $2\pi$-periodic potentials $V$ and $\sigma$ and with a real parameter $\omega$.
The periodic potentials satisfy the conditions of the $\PT$ (parity-time reversal) symmetry
given by
\begin{equation}
\label{PT-symmetry}
V(-x)=\overline{V(x)}, \quad \sigma(-x)=\overline{\sigma(x)}, \quad \text{ for all } x\in \R.
\end{equation}

\begin{ass}\label{ass:Vsig}
Assume $V \in L^{\infty}_{\rm per}(0,2\pi)$ and
 $\sigma\in H^s_\text{per}(0,2\pi)$ with $s > 1/2$
satisfy the $\PT$-symmetry condition (\ref{PT-symmetry}).
\end{ass}

Consider the linear Schr\"{o}dinger operator
\begin{equation}
\label{Schrodinger}
\mathcal{L} := -\frac{d^2}{dx^2}+V : \quad H^2(\mathbb{R}) \to L^2(\mathbb{R}),
\end{equation}
which is not self-adjoint if $V$ is complex. Nevertheless, we assume the existence of a real
spectral interval in the spectrum of $\mathcal{L}$ and prove the existence of $H^s(\R)$-solutions
(with $s>1/2$) to the stationary GP equation (\ref{E:nls}) bifurcating from an edge $\omega_* \in \R$ of the spectral interval.
We call these solutions nonlinear bound states. They correspond to standing waves $\psi(x,t)=e^{-\ri\omega t}u(x)$ of the
$t-$dependent Gross-Pitaevskii (GP) equation
\begin{equation}
\label{nls}
\ri \pa_t\psi = -\pa_x^2\psi + V(x)\psi + \sigma(x)|\psi|^2\psi.
\end{equation}
The bifurcating solutions $u$ are approximated via
a slowly varying envelope ansatz. In a generic case (non-vanishing second derivative
of the spectral function at the edge and non-vanishing coefficient in front of the
cubic nonlinear term) the effective envelope equation is a nonlinear Schr\"odinger equation with constant coefficients.

We work in the Sobolev space $H^s(\R)$ with $s>1/2$ in order to enjoy the algebra property
and the embedding of $H^s(\R)$ to the space of bounded and continuous functions decaying to zero at
infinity. Besides the Banach fixed point theorem the main analytical tool in the justification of the effective amplitude
equation is the Bloch transformation $\cB$ given formally by
\begin{equation}
\label{BT}
\util(x,k)=(\cB u)(x,k) = \sum_{n\in\Z} u(x+2\pi n) e^{-\ri k x - 2 \pi \ri n k}.
\end{equation}
The Bloch transformation was introduced by Gelfand \cite{Gelfand} and was used in the analysis of
the Schr\"odinger operator $\mathcal{L}$ with a real periodic potential $V$ \cite{RS}.
With $\B :=(-1/2,1/2]$ being the so-called Brillouin zone, the Bloch transform
$$\cB:H^s(\R) \to \cX_s :=L^2(\B,H^s(0,2\pi))$$
is an isomorphism for $s\geq 0$, see \cite{RS}, with the inverse given by
\begin{equation}
\label{IBT}
u(x)=(\cB^{-1}\util)(x)=\int_{\B}e^{\ri k x}\util(x,k)\dd k.
\end{equation}
As the norm in $\cX_s$ we choose
$$\|\util\|_{\cX_s}=\left(\int_\B \|\util(\cdot,k)\|^2_{H^s(0,2\pi)}\dd k\right)^{1/2}.$$

Under the Bloch transform the linear Schr\"{o}dinger operator (\ref{Schrodinger})
is represented by the family of linear operators parameterized by $k\in \B$ and given by
\begin{equation}
\label{operator-L}
L(k) := -\left(\frac{d}{dx}+\ii k\right)^2 +V : \quad H^2(0,2\pi)\to L^2(0,2\pi).
\end{equation}
Consider the family of Bloch eigenvalue problems
\beq\label{E:Bloch-ev}
\left\{ \begin{array}{l} L(k) p(\cdot,k) = \omega(k)p(\cdot,k), \\
p(x+2\pi,k)=p(x,k) \quad \quad \text{for all } x\in\R,
\end{array} \right.
\eeq
under the normalization condition $\|p(\cdot,k)\|_{L^2(0,2\pi)}=1$. In what follows we denote the eigenpairs of the Bloch eigenvalue problem \eqref{E:Bloch-ev} by $(\omega_m(k),p_m(x,k))$, $m\in \N$,
where the ordering can be done, \eg, according to the real part of the eigenvalues $\omega_m(k)$ (including their multiplicity).

We assume the existence of a real spectral interval given by an eigenvalue family $\{\omega_m(k)\}_{k\in \B}$ of
the periodic eigenvalue problem \eqref{E:Bloch-ev} for some $m \in \mathbb{N}$.
We also assume that this interval is disjoint from the rest of the spectrum of $\mathcal{L}$
given by (\ref{Schrodinger}) and that the end points of the spectral interval have
non-vanishing second derivative of $\omega_m$. In summary we pose the following.

\begin{ass}\label{ass:band}
For some $m\in \N$ the eigenvalue family $\omega_m$ is real with the spectral interval
\beq
\label{reality}
\omega_m(\B) = [a,b]\subset \R
\eeq
and with $[a,b]$ separated from the rest of the spectrum $\cup_{k\in \B}\sigma(L(k))$.
Moreover, the eigenvalue $\omega_m(k)$ is simple for each $k\in \B$.
For an end point $\omega_*\in \{a,b\}$ we assume that
\beq
\label{non-degeneracy}
\omega_m''(k_0)\neq 0,
\eeq
where $k_0 \in \B$ is the preimage of $\omega_*$ under the mapping $\omega_m$.
\end{ass}

In Fig. \ref{Fig:bd_str_gen} we plot the first (with respect to the real part) several eigenvalues of
the spectral problem \eqref{E:Bloch-ev} with
$$
V(x) = 2\cos(x)+\cos(2x)+\ri\gamma\sin(2x)
$$
for $\gamma=1$ (a) and $\gamma=3/2$ (b,c). They have been computed via a finite difference discretization.
For $\gamma=1$ all the lower spectral intervals appear real while for $\gamma=3/2$ a symmetry breaking has
occurred where the two lowest eigenvalue functions have collided and bifurcated into a complex conjugate pair.
The third spectral function remains real for $\gamma=3/2$ and its image is the marked interval $[a,b]$.
At $\omega=a$ we have $k_0=0$ and at $\omega=b$ it is $k_0=1/2$. The fourth and fifth spectral
functions also produce unstable eigenvalues in a small neighborhood of $k = 0$.
\begin{figure}[ht!]
\includegraphics[scale=0.5]{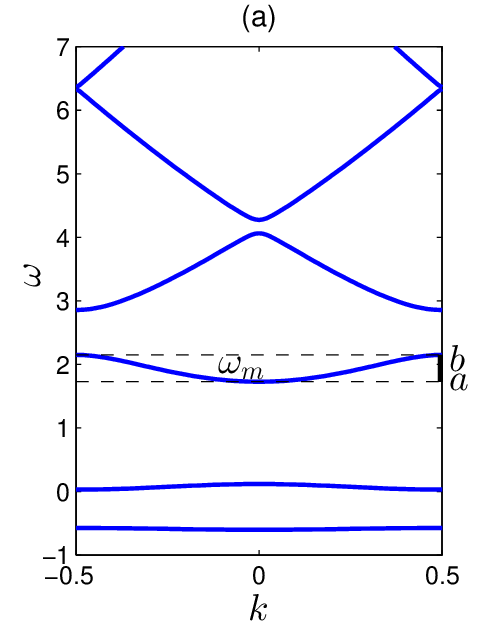}
\includegraphics[scale=0.5]{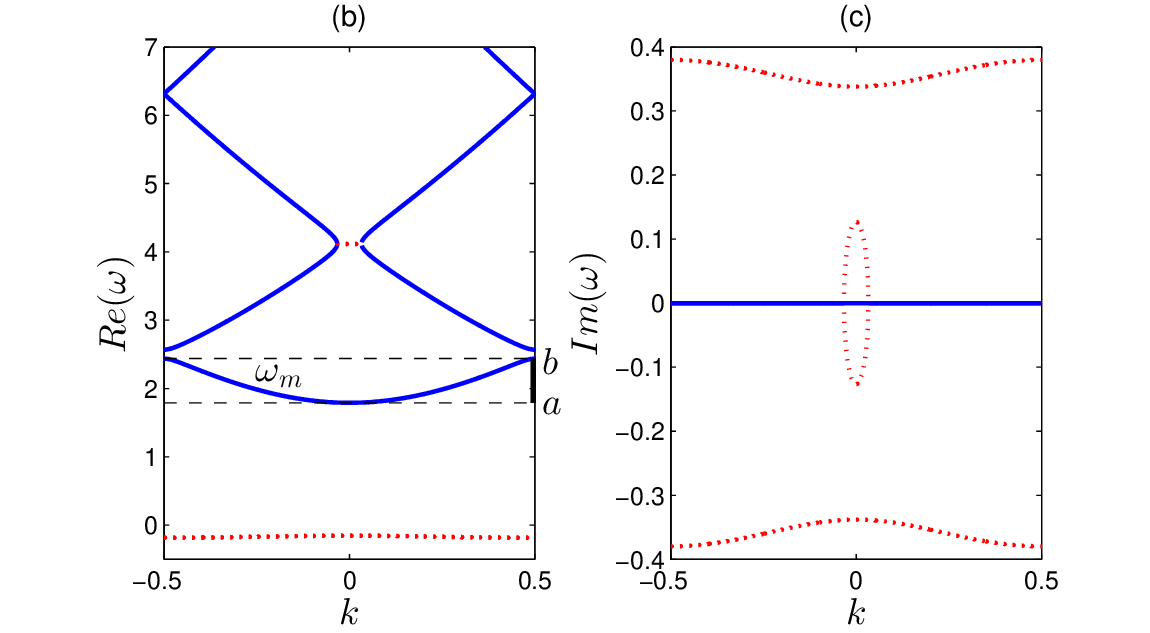}
\caption{The smallest (with respect to the real part) several eigenvalues $\omega_n(k)$ of the spectral problem \eqref{E:Bloch-ev}
with $V(x)=2\cos(x)+\cos(2x)+\ri\gamma \sin(2x)$, where $\gamma=1$ in (a) and $\gamma=3/2$ in (b) and (c). Purely real eigenvalues are plotted with the full blue line, complex eigenvalues are in dotted red. A real spectral interval $[a,b]=\omega_m(\B)$ is marked in both (a) and (b).}
\label{Fig:bd_str_gen}
\end{figure}

For real potentials $V \in L^{\infty}_{\rm per}((0,2\pi),\R)$,
the eigenvalue family $\{ \omega_m(k) \}_{k \in \B}$ cannot have an extremum for $k \notin \{0,1/2\}$
due to the symmetry $\omega_m(-k)=\omega_m(k)$, the $1-$periodicity of $\omega_m$ and the fact that
the differential equation $\mathcal{L} u = \lambda u$ posed for the Schr\"{o}dinger operator
(\ref{Schrodinger}) on the infinite line is of the second order \cite{RS}. Because the spectral interval
$[a,b]$ is isolated from the rest of the spectrum of $\mathcal{L}$,
the eigenvalue family $\{ \omega_m(k) \}_{k \in \B}$ then must have an extremum at either $k_0 = 0$
or $k_0 = 1/2$ and due to the smoothness  of simple eigenvalues with respect to parameters one has
$\omega_m'(k_0)=0$.

The extension of these properties to general non-self-adjoint operator $\mathcal{L}$
with complex potentials $V \in L^{\infty}_{\rm per}(0,2\pi)$ is not obvious. Nevertheless,
for $\PT$-symmetric potentials we show in Section 2 that the reflection symmetry
\beq
\label{symmetry-omega}
\omega_m(-k) = \omega_m(k) \quad \mbox{\rm for all \;} k \in \B,
\eeq
still holds for the eigenvalue family $\{ \omega_m(k) \}_{k \in \B}$ in Assumption \ref{ass:band}.
The $1-$periodicity and smoothness of $\omega_m$ in $k$ hold clearly as well.
Finally, the possibility of an extremum of the eigenvalue family $\{ \omega_m(k) \}_{k \in \B}$
at $k_0 \in (0,1/2)$ is excluded by the same argument as in the case of real potentials.
Indeed, if an extremum of $\omega_m$ exists at $k_0$, it also occurs at $-k_0$ by symmetry (\ref{symmetry-omega}).
Therefore, on one side of the extremal value of $\omega_m$, we have four bounded linearly independent
solutions of the eigenvalue problem $\mathcal{L} u = \lambda u$ on the infinite line,
which contradicts the fact that the eigenvalue problem is given by a second-order differential
equation. Hence, if the spectral band $[a,b]$ is isolated, then the eigenvalue family
$\{ \omega_m(k) \}_{k \in \B}$ has an extremum at either $k_0 = 0$ or $k_0 = 1/2$ and
\beq\label{E:om_stationary}
\omega_m'(k_0)=0 \ \text{ for } \ k_0=0 \; {\rm or} \; k_0 = \frac{1}{2}.
\eeq

For any eigenpair $(\omega(k),p(\cdot,k))$ of the spectral problem \eqref{E:Bloch-ev},
the pair $(\overline{\omega(k)},q(\cdot,k))$ with $q(x,k) := \overline{p(-x,k)}$ for all $x \in \mathbb{R}$
is also an eigenpair of the same eigenvalue problem.
This can be seen by complex conjugating $L(k) p(\cdot,k) = \omega(k) p(\cdot,k)$,
$$
\left[-\left(\frac{d}{dx}-\ii k\right)^2 +\overline{V(x)}\right] \overline{p(x,k)}= \overline{\omega(k)}~\overline{p(x,k)},
$$
using the $\PT$-symmetry (\ref{PT-symmetry}), and transforming $x\to -x$,
$$
\left[-\left(\frac{d}{dx}+\ii k\right)^2 +V(x)\right] \overline{p(-x,k)}= \overline{\omega(k)}~\overline{p(-x,k)},
$$
hence $L(k) q(\cdot,k) = \overline{\omega(k)} q(\cdot,k)$.
If $\omega(k)\in\R$ is simple for some $k \in \B$, then
$p(\cdot,k)$ and $q(\cdot,k)$ are linearly dependent and, thanks to the normalization condition,
the eigenfunction $p(\cdot,k)$ can be chosen to satisfy the $\PT$-symmetry condition,
\begin{equation}
\label{PT-symmetry-eigenfunction}
\overline{p(-x,k)}=p(x,k), \quad \mbox{\rm for all \;} x \in \mathbb{R}.
\end{equation}

In what follows, we say that the solution $u$ to the stationary GP equation \eqref{E:nls}
is $\PT$-symmetric if it satisfies the same $\PT$-symmetry condition,
\begin{equation}
\label{PT-symmetry-solution}
\overline{u(-x)} = u(x), \quad \mbox{\rm for all \;} x \in \mathbb{R}.
\end{equation}

We study the bifurcation of $\PT$-symmetric solutions to the stationary GP equation (\ref{E:nls})
from an endpoint $\omega_*$ of the real interval $[a,b]$ into a spectral gap. Hence,
we pick $\omega_*\in \{a,b\}$ as in Assumption \ref{ass:band} and set
\beq
\label{omega-decomposition}
\omega = \omega_*+\eps^2 \Omega,
\eeq
where $\eps$ is a formal small parameter and $\Omega = -1$ if $\omega_* = a$ or $\Omega = +1$ if $\omega_* = b$.

We prove in Section 3 that similarly to the case of real potentials $V$ and $\sigma$ \cite{DPS,DU}
(see also a review in Chapter 2 in \cite{Peli}), the family of nonlinear bound states in $H^s(\mathbb{R})$ with
$s > 1/2$ bifurcating from $\omega_*$ can be approximated via the slowly varying envelope ansatz
\beq\label{E:ans-formal}
u(x) \sim u_{\text{form}}(x) := \eps A(\eps x) e^{\ii k_0 x} p_m(x,k_0) \quad \text{ as } \quad \eps \to 0,
\eeq
where $A \in H^{s_A}(\R)$ with $s_A \geq 1$ satisfies the effective amplitude equation given by
the stationary nonlinear Schr\"odinger (NLS) equation,
\beq\label{E:SNLS}
- \frac{1}{2} \omega_m''(k_0) \frac{d^2A}{dX^2} + \Gamma |A|^2A = \Omega A,
\eeq
with
\beq
\label{Gamma}
\Gamma := \int_{-\pi}^{\pi}\sigma(x)p_m(x,k_0)|p_m(x,k_0)|^2\overline{p_m(-x,-k_0)}\dd x.
\eeq
The coefficient $\Gamma$ is real due to the $\PT$-symmetry of $\sigma$ and $p_m(\cdot,\pm k_0)$
in (\ref{PT-symmetry}) and (\ref{PT-symmetry-eigenfunction}).

If the effective equation \eqref{E:SNLS} has a bound state, we may expect the same for the GP equation \eqref{E:nls}.
It follows from the elementary phase-plane analysis that
bound states of the stationary NLS equation (\ref{E:SNLS}) exist if and only if
\beq\label{E:GS_iff}
\Gamma \neq 0 \ \text{ and } \ {\rm sign}(\Gamma) = -{\rm sign}(\omega''(k_0)) = {\rm sign}(\Omega).
\eeq
Real even bound states $A$ are unique and have an explicit ${\rm sech}$-function form, see Lemma 6.15 in  \cite{Fibich}. They
belong to $H^{s_A}(\R)$ for every $s_A \geq 0$. For the justification
of the effective equation we need the invertibility of the linearization operator of
the NLS equation at the bound state $A$. For this the translational and gauge invariances of
the differential equation (\ref{E:SNLS}) need to be eliminated, which is achieved if $A$ satisfies
the $\PT$-symmetry condition
\begin{equation}
\label{PT-symmetry-A}
\overline{A(-x)} = A(x), \quad \mbox{\rm for all \;} x \in \mathbb{R}.
\end{equation}

The following theorem justifies the effective amplitude equation (\ref{E:SNLS})
used for the approximation (\ref{E:ans-formal}) and constitutes the main result of this article.

\bthm
Let $1/2 < s \leq 2$ and $0 < r < 1/2$ and assume \ref{ass:Vsig} and \ref{ass:band}.
Let $A \in H^{s_A}(\R)$ be a $\PT$-symmetric solution to the stationary NLS equation \eqref{E:SNLS}
with $s_A\geq 1$ satisfying (\ref{PT-symmetry-A}).
Then there are constants $c > 0$ and $\eps_0>0$ such that for each $\eps \in (0,\eps_0)$ there exists
a $\PT$-symmetric solution $u\in H^s(\R)$ of the stationary GP equation \eqref{E:nls} with
$\omega=\omega_*+\eps^2\Omega$ satisfying (\ref{PT-symmetry-solution}) and
$$
\|u - u_{\text{form}}\|_{H^s(\R)}\leq c\eps^{r+1/2},
$$
where $u_{\text{form}}$ is defined in \eqref{E:ans-formal}.
\label{theorem-main}
\ethm

\begin{remark}
Condition \eqref{E:GS_iff} for the existence of NLS bound states implies that
the bifurcation is always into a spectral gap. At the lower spectral edge $\omega_*=a$,
where $\omega''(k_0)>0$, one has $\Omega<0$ and the bifurcation in $\omega$ is down from $a$;
analogously at the upper edge $b$, where $\omega''(k_0) < 0$, one has $\Omega > 0$
and the bifurcation in $\omega$ is up from $b$.
\end{remark}

\begin{remark}
Theorem \ref{theorem-main} guarantees that the error $u-u_\text{form}$ is indeed smaller than the approximation $u_\text{form}$ itself because $\|u_\text{form}\|_{H^s(\R)} \sim c\eps^{1/2}$ as $\eps \to 0$ for an $\eps$-independent $c$.
\end{remark}

\begin{remark}\label{R:gen}
The statement of Theorem \ref{theorem-main} can be generalized in a number of ways.
First, one can prove existence of smoother solutions with $s > 2$
provided that $p(\cdot,k_0)$ belongs to $H^s_{\rm per}(0,2\pi)$. This
would require a smoother potential $V$ than the one in \ref{ass:Vsig}.
Second, the spectral interval $[a,b]$ does not have to be real entirely, as in
\ref{ass:band}. For the justification result, it is sufficient that a little segment of
$[a,b]$ near the end point $\omega_*$ be real. Similarly, the simplicity assumption
of the eigenvalue $\omega_m(k)$ has to be satisfied only near the end point
that corresponds to $k = k_0$.
\end{remark}

\brem
Assumption \ref{ass:band} is not satisfied for an arbitrary complex $V$.
Propositions \ref{prop-1}, \ref{prop-2} and \ref{prop-3} in Section \ref{S:linspec} give sufficient conditions
for the occurrence of complex spectral bands
if $V(x)=U(x)+\ri \gamma W(x)$ with $U$ even and $W$ odd and with $\gamma >0$ arbitrarily small.
The sufficient conditions detect bifurcations of double eigenvalues at $\gamma=0$
into complex pairs of simple eigenvalues for $\gamma > 0$.
\erem

\begin{remark}
Recent interest in $\PT$-symmetric periodic potentials is explained by the experimental realization
of such optical lattices in physical experiments \cite{pt2,pt3}. Several computational
works were devoted to the existence and spectral stability of standing waves
in the GP equation with complex periodic potentials \cite{He,MMGC2008,nixon1} (see also the review in \cite{Konotop}).
Persistence of real spectrum in honeycomb $\PT$-symmetric potentials was studied in \cite{Curtis1}.
Small $\PT$-symmetric perturbations of honeycomb periodic potentials were considered in
\cite{Curtis2} and their effect on the nonlinear dynamics of the GP equation was studied.
A heuristic asymptotic method was used in \cite{nixon2} to approximate the standing
waves of the GP equation by ${\rm sech}$-solitons of the stationary NLS equation. Our work is the first one,
to the best of our knowledge, which gives a rigorous proof of the existence
of nonlinear bound states and their approximation by an effective equation for the bifurcation from
an edge of a real interval in the spectrum of a $\PT$-symmetric non-selfadjoint linear Schr\"{o}dinger operator.
\end{remark}

\begin{remark}
The bifurcation from simple eigenvalues is a more classical problem. The bifurcation of
nonlinear bound states from possibly complex eigenvalues of non-selfadjoint Fredholm operators
is covered in the pioneering works \cite{CR1971,IZE1976}. The bifurcation of nonlinear bound states
from simple real eigenvalues under an antilinear symmetry (which includes the $\PT$-symmetry)
has been shown for a large class of nonlinear problems in \cite{DS2016}. Earlier, in \cite{KPT2013}
this bifurcation was proved for the special case of a discrete NLS equation on a finite lattice.
The main difference between the bifurcation from a simple eigenvalue and from the edge of
a spectral interval is that in the former case the existence of the bifurcation is automatic
due to the separation of a simple eigenvalue from the rest of the spectrum while in the latter case
the edge is connected to the spectral band. In the case of simple eigenvalues a bifurcation occurs
even without symmetry assumptions and $\PT$-symmetry is used only to show
that the nonlinear bound state corresponds to real eigenvalue parameter.
In the case of a spectral \textit{interval} the symmetry is crucial for proving the bifurcation itself.
\end{remark}

The rest of the article is organized as follows.
Section \ref{S:adj_prob} covers the technical results associated
with the adjoint eigenvalue problem and with the Bloch transform.
Section \ref{S:NL_est} gives a proof of Theorem \ref{theorem-main}.
Section \ref{S:linspec} reports results based on perturbation theory which
give sufficient conditions on when Assumption \ref{ass:band} is not satisfied.

\section{The adjoint eigenvalue problem and the Bloch transform revisited}\label{S:adj_prob}

Since the spectral problem \eqref{E:Bloch-ev} is not self-adjoint in the presence
of complex periodic potentials, we also introduce the adjoint eigenvalue problem. By
the Fredholm theory (see Remark 6.23 in Chapter III.6.6 \cite{Kato}), eigenvalues of the
adjoint operator $L^*(k)$ are related to the eigenvalues of the operator $L(k)$
by complex conjugation. The adjoint eigenvalue problem is
written by
\beq\label{E:Bloch-ev-adj}
\left\{ \begin{array}{l}
L^*(k) p^*(\cdot,k) = \overline{\omega(k)} p^*(\cdot,k), \\
p^*(x+2\pi,k)=p^*(x,k) \quad \quad \text{for all } x\in\R,
\end{array} \right.
\eeq
where
\begin{equation}
\label{operator-L-adjoint}
L^*(k) := -\left(\frac{d}{dx}+\ii k\right)^2 + \overline{V} : \quad H^2(0,2\pi)\to L^2(0,2\pi)
\end{equation}
is the adjoint operator and $p^*(\cdot,k)$ is the adjoint eigenfunction.

If $\omega(k)$ is a simple eigenvalue of the spectral problem \eqref{E:Bloch-ev},
then $\overline{\omega(k)}$ is a simple eigenvalue of the adjoint spectral problem (\ref{E:Bloch-ev-adj})
and the adjoint eigenfunction can be uniquely normalized by $\langle p^*(\cdot,k), p(\cdot,k)\rangle=1$.
Indeed, if $\omega(k)$ is a simple eigenvalue, then $\langle p^*(\cdot,k), p(\cdot,k) \rangle = 0$ leads to
a contradiction. In detail, if $\langle p^*(\cdot,k), p(\cdot,k) \rangle = 0$, then we have
$$
p(\cdot,k) \in \text{Ker}(L(k)-\omega(k))\cap \text{Ran}(L(k)-\omega(k)).
$$
Therefore, there exists $\phi\in H^2(0,2\pi)\setminus\{0\}$ such that $(L(k)-\omega(k))\phi=p(\cdot,k)$.
At the same time, $\omega(k)$ being simple implies
$$
\text{Ker}(L(k)-\omega(k))^2=\text{Ker}(L(k)-\omega(k)),
$$
so that $\phi=cp(\cdot,k)$ with $c \in \R$, which contradicts
equation $(L(k)-\omega(k))\phi=p(\cdot,k)$. Thus, $\langle p^*(\cdot,k), p(\cdot,k) \rangle \neq 0$,
and the normalization $\langle p^*(\cdot,k), p(\cdot,k)\rangle=1$ can be used.

In the case of simple eigenvalues, the eigenpair $(\omega(k),p(\cdot,k))$
of the spectral problem \eqref{E:Bloch-ev} and the eigenpair $(\overline{\omega(k)},p^*(\cdot,k))$
of the adjoint problem \eqref{E:Bloch-ev-adj} are related via
\begin{equation}
\label{reflection}
\overline{\omega(k)} = \omega(-k), \quad p^*(x,k)=p(-x,-k).
\end{equation}
This follows from the $\PT$-symmetry of $V$ in (\ref{PT-symmetry}) such that
after the transformation $x\to -x$ and $k \to -k$, the adjoint problem $L^*(k) p^*(\cdot,k) = \overline{\omega(k)} p^*(\cdot,k)$
becomes
$$
\left[-\left(\frac{d}{dx}+\ii k\right)^2 +V(x)\right] p^*(-x,-k)= \overline{\omega(-k)} p^*(-x,-k),
$$
which coincides with the spectral problem (\ref{E:Bloch-ev}). As a result of the symmetry
reflection (\ref{reflection}) we obtain
\beq
\label{symmetry-omega-general}
\omega(-k) = \omega(k) \quad \mbox{\rm for all \;} k \in \B,
\eeq
for every simple real eigenvalue family $\{ \omega(k) \}_{k \in \B}$.
In addition, the eigenvalue family can be continued as a $1$-periodic function of $k$ on $\mathbb{R}$.
This symmetry and the $k-$smoothness of simple eigenvalues $\omega(k)$
justify \eqref{symmetry-omega} and \eqref{E:om_stationary} claimed in Section 1.

Before we proceed with the proof of Theorem \ref{theorem-main}, let us also elaborate properties
of the Bloch transform defined by (\ref{BT}) and (\ref{IBT}). Let
$\uhat$ be the standard Fourier transform of $u$ given by
$$
\hat{u}(\xi) = \left( \mathcal{F} u \right)(\xi) = \frac{1}{2\pi}\int_{\R} u(x)e^{-\ri \xi x}\dd x, \quad
u(x) = \left( \mathcal{F}^{-1} \hat{u} \right)(x) = \int_{\R}\hat{u}(\xi)e^{\ri \xi x}\dd \xi.
$$
Then, the Bloch transform (\ref{BT}) can also be related to the Fourier transform as follows:
\begin{equation}
\label{BT-Fourier}
\util(x,k) = \sum_{j \in\Z} e^{\ri j x} \hat{u}(k + j),
\end{equation}
see \cite{BSTU06ZAMP} or Section 2.1.2 in \cite{Peli}.

By construction of $\util$ in the definition of the Bloch transform in (\ref{BT}),
we have the continuation property for all $x \in \R$ and $k \in \R$:
\beq\label{E:util_per}
\util(x+2\pi,k)=\util(x,k) \quad \text{and} \quad \util(x,k+1)=e^{-\ri x}\util(x,k).
\eeq

For two functions $u,v\in H^s(\R)$ with $s>1/2$, the product $uv$ is also in $H^s(\R)$,
thanks to the Banach algebra of $H^s(\R)$ with respect to the pointwise multiplication \cite[Thm.4.39]{Adams-2003}.
In the Bloch space the multiplication operator is conjugate to the convolution operator:
$$
(\cB(uv))(x,k)=(\util *_\B \vtil)(x,k):=\int_\B \util(x,k-l)\vtil(x,l) \dd l
= \int_{\B+k_0} \util(x,k-l)\vtil(x,l) \dd l
$$
for any $k_0 \in \R$, where the last equality holds due to the $1-$quasi-periodicity of the Bloch transform
in the variable $k$. The convolution property follows from relation (\ref{BT-Fourier}). Note that due to the algebra property of $H^s(\R)$ for $s>1/2$
and the above identity we have also the algebra property
\beq\label{E:alg_Xs}
\|\util *_\B \vtil\|_{\cX_s}\leq c \|\util\|_{\cX_s} \|\vtil\|_{\cX_s} \quad \text{for any } \util,\vtil\in \cX_s \quad  \text{if } s>1/2,
\eeq
where the constant $c > 0$ depends on $s$.

Finally, for any $2\pi$-periodic and bounded function $\sigma$ we have the property
\begin{equation}
\label{Bloch-periodicity}
(\cB (\sigma u))(x,k) = \sigma(x) (\cB u)(x,k) \quad \mbox{\rm for all \;} x \in \mathbb{R} \;\; \mbox{\rm and} \;\; k \in \R.
\end{equation}
The commutativity property follows directly from the representation (\ref{BT}).

\section{Nonlinear estimates; proof of Theorem \ref{theorem-main}}\label{S:NL_est}

Problem \eqref{E:nls} transforms via the Bloch transform $\cB$ to the form
\beq\label{E:nls-Bloch}
(L(k)-\omega)\util(x,k)+\sigma(x)(\util *_\B \util *_\B \ubartil)(x,k)=0,
\eeq
where property (\ref{Bloch-periodicity}) has been used.

We decompose $\util$ into the part corresponding to the spectral band $\omega_m$ and the rest.
Note that we cannot use a full spectral decomposition of 	$\util(\cdot,k)$ as this is not available for non-selfadjoint problems.
For our decomposition we define the projections
$$
P_k : \; H^s(0,2\pi)\to \text{span}\{p_m(\cdot,k)\}
$$
and
$$
Q_k = I-P_k : \; H^s(0,2\pi)\to \text{span}\{p_m^*(\cdot,k)\}^{\perp}
$$
with $m\in \N$ fixed by assumption \ref{ass:band}, such that $(P_k \util)(\cdot,k) := \langle \util(\cdot,k),p_m^*(\cdot,k)\rangle p_m(\cdot,k)$,
where $\langle \cdot, \cdot\rangle$ is the standard $L^2(0,2\pi)-$inner product. Decomposing now the solution into
$$\util(x,k)=\util_1(x,k)+\util_2(x,k),$$
where
\begin{eqnarray*}
\util_1(\cdot,k) & = & P_k\util(\cdot,k) = U_1(k)p_m(\cdot,k) \quad \text{with } U_1(k)\in \C, \\
\util_2(\cdot,k) & = & Q_k\util(\cdot,k),
\end{eqnarray*}
and using $\omega = \omega_* + \eps^2 \Omega$ as is given by (\ref{omega-decomposition}),
equation \eqref{E:nls-Bloch} is written as a system of two equations given by
\begin{equation}
(\omega_m(k)-\omega_*-\eps^2\Omega)U_1(k) + \langle F(\util)(\cdot,k),p_m^*(\cdot,k)\rangle = 0 \label{E:P-eq}
\end{equation}
and
\begin{equation}
Q_k(L(k)-\omega_*-\eps^2\Omega)Q_k\util_2(x,k) + Q_k F(\util_1) + Q_k(F(\util)-F(\util_1)) = 0,
\label{E:Q-eq}
\end{equation}
where $F(\vtil)(x,k) := \sigma(x)(\vtil*_\B\vtil*_\B \vbartil)(x,k)$.
We note that $U_1$ is $1$-periodic because $\util(x,\cdot)$ and $p(x,\cdot)$ are
$1$-quasiperiodic.

Since $Q_k F(\util_1)$ in (\ref{E:Q-eq}) produces a large output, we need to
perform a near-identity transformation before we can proceed with the nonlinear estimates.
See the pioneering work \cite{KSM92} that explains this procedure.
We hence decompose $\util_2$ into
$$
\util_2(x,k)=\util_{2,1}(x,k)+\util_R(x,k),
$$
where $\util_{2,1}$ and $\util_R$ solve equations
\beq\label{E:util21-eq}
Q_k(L(k)-\omega_*-\eps^2\Omega)Q_k\util_{2,1}(x,k) + Q_k F(\util_1) = 0
\eeq
and
\beq\label{E:utilR-eq}
Q_k(L(k)-\omega_*-\eps^2\Omega)Q_k\util_R(x,k) + Q_k(F(\util)-F(\util_1)) = 0.
\eeq
The resulting system of equations is given by \eqref{E:P-eq}, \eqref{E:util21-eq} and \eqref{E:utilR-eq}.

The component $\util_1$ is supposed to approximately recover the Bloch transform of the formal ansatz \eqref{E:ans-formal}. Note that because $\cF \left(A(\eps \cdot)e^{\ri k_0 \cdot}\right)(k)=\eps^{-1} \hat{A}(\eps^{-1}(k-k_0))$, we have
\beq\label{E:uform_Bloch}
\cB(u_\text{form})(x,k)=\sum_{j\in \Z}\hat{A}\left(\frac{k-k_0+j}{\eps}\right)p_m(x,k_0)e^{\ri jx},
\eeq
where we have used properties (\ref{BT-Fourier}) and (\ref{Bloch-periodicity}).
Since $\hat{A}(\eps^{-1}(k-k_0))$ is concentrated near $k=k_0$, we decompose $U_1$ on $\B+k_0$
into a part compactly supported near $k_0$ and the rest. We write
\beq\label{E:U1-decomp}
U_1(k)=\hat{D}\left(\tfrac{k-k_0}{\eps}\right):=\hat{B}\left(\tfrac{k-k_0}{\eps}\right)+\hat{C}\left(\tfrac{k-k_0}{\eps}\right), \quad k\in \B+k_0,
\eeq
and continue $U_1$ outside $\B+k_0$ periodically with period one. For all $\eps$ small enough
the components $\hat{B}$ and $\hat{C}$ are defined by their support
$$
\begin{aligned}
&\supp\left(\hat{B}\left(\tfrac{\cdot-k_0}{\eps}\right)\right)\subset (k_0-\eps^{r},k_0+\eps^{r}),\\
&\supp \left(\hat{C}\left(\tfrac{\cdot-k_0}{\eps}\right)\right)  \subset (k_0+\B) \setminus (k_0-\eps^{r},k_0+\eps^{r}),
\end{aligned}
$$
where $r\in (0,1)$ is a parameter to be specified to suit the nonlinear estimates.
Equivalently, defining
$$
I_{\eps^{r-1}}:=(-\eps^{r-1},\eps^{r-1}),
$$
we have $\supp(\hat{B})\subset I_{\eps^{r-1}}$ and
$\supp(\hat{C}) \subset \eps^{-1}\B \setminus I_{\eps^{r-1}}$.
We note that neither $\hat{B}$, $\hat{C}$, nor $\hat{D}$ refer to the Fourier transform,
since they are defined in the Bloch space. On the other hand, $\hat{A}$ denotes the Fourier
transform of the amplitude variable $A$ that satisfies the effective amplitude equation
(\ref{E:SNLS}). In the Fourier variable $\kappa$ the amplitude $\hat{A}$ satisfies the
effective amplitude equation in the form
\beq\label{E:SNLS-Fourier}
\left( \frac{1}{2} \omega_m''(k_0) \kappa^2 - \Omega \right) \hat{A}(\kappa) + \Gamma \left(
\hat{A} * \hat{A} * \hat{\overline{A}} \right)(\kappa) = 0.
\eeq

We aim at constructing a solution $\util$ with $\hat{B}$ close to $\hat{A}$ on $I_{\eps^{r-1}}$
and with the other components $\hat{C}, \util_{2,1}$ and $\util_R$ being small corrections.
Hence,  due to \eqref{E:uform_Bloch} ansatz \eqref{E:U1-decomp} corresponds formally to the slowly varying envelope
ansatz \eqref{E:ans-formal}.

Obviously, system \eqref{E:P-eq}, \eqref{E:util21-eq} and \eqref{E:utilR-eq} is coupled
in the components $\hat{B},\hat{C}, \util_{2,1}$ and $\util_R$. Nevertheless, it can be approached by treating each problem independently with consistent assumptions on the form and size of the remaining components. In brief, our steps to construct such a solution
$\util\in \cX_s$ of \eqref{E:nls-Bloch}, i.e. of the original equation in the Bloch space, are as follows:
\begin{enumerate}
\item For any given $\util_1$ small, solve \eqref{E:util21-eq} uniquely to produce
a small $\util_{2,1}$ due to  the invertibility of $Q_k(L(k)-\omega_*-\eps^2\Omega)Q_k$ in $Q_k\cX_s$ for $\eps$ small enough.
\item For any given $\util_1$ small, apply the Banach fixed point theorem to \eqref{E:utilR-eq} in a neigborhood of zero
to find a small solution $\util_R$.
\item For any given $\hat{B}$ decaying sufficiently fast, apply the Banach fixed point theorem
to \eqref{E:P-eq} on the support of $\hat{C}$ to find a small $\hat{C}$.
\item Prove the existence of such solutions $\hat{B}$ to equation \eqref{E:P-eq} (with the component $\hat{C}$ given by step 3) on the support of $\hat{B}$ that are close to a solution $\hat{A}$ of equation \eqref{E:SNLS-Fourier}. It is in this step where a restriction to
the $\PT$-symmetric solutions is necessary. It allows for the invertibility of the Jacobian operator
at $\hat{A}$ associated with equation \eqref{E:SNLS-Fourier}.
\end{enumerate}
The rest of this section explains the details of each step in the justification analysis.
We denote a generic, positive, $\eps$-independent constant by $c$. It may change from one line to another line.
We also restrict our work to the space $\cX_s$ with $1/2 < s \leq 2$.

\subsection{Preliminary estimates}

We assume that for all $\eps > 0$ sufficiently small
 \beq\label{E:BC-est}
\|\hat{B}\|_{L^2_{s_B}(\R)} +  \|\hat{C}\|_{L^2_{s_C}(\R)} \leq c,
\eeq
where $s_B,s_C \geq 0$ are to be determined later and the space $L^2_s(\R)$ for $s\geq 0$ is
$$
L^2_s(\R):=\{f\in L^2(\R): \|f\|_{L^2_s(\R)}:=\|(1+|\cdot|)^sf\|_{L^2(\R)}<\infty\}.
$$
We estimate first $\|\util_1\|_{\cX_s}$.
Since $p_m(\cdot,k) \in H^2(0,2\pi)$ in the domain of $L(k)$ given by (\ref{operator-L}),
there is a positive constant $c$ such that for all $\eps > 0$ sufficiently small
and any $1/2 < s \leq 2$, we have
\beq\label{E:u1-est}
\|\util_1\|_{\cX_s} \leq \sup_{k\in \B}\|p_m(\cdot,k)\|_{H^2(0,2\pi)}\left\|\hat{D}\left(\frac{\cdot-k_0}{\eps}\right)\right\|_{L^2(\B)}
\leq c\eps^{1/2}\|\hat{D}\|_{L^2(\eps^{-1}\B)}.
\eeq

Next, let us consider the $\cX_s-$norm of $F(\util_1)$ given by
$$
F(\util_1) = \sigma(x) \int_\B\int_\B U_1(k-l)U_1(l-t)\overline{U}_1(-t)p_m(x,k-l)p_m(x,l-t)\overline{p_m(x,-t)}\dd t\dd l,
$$
which appears in equations \eqref{E:util21-eq} and \eqref{E:utilR-eq}. By assumption \ref{ass:Vsig} we get
\begin{eqnarray*}
\|F(\util_1)\|_{\cX_s} & \leq & \|\sigma\|_{H^s(0,2\pi)}\sup_{k\in \B}\|p_m(\cdot,k)\|_{H^2(0,2\pi)}^3
\|U_1*_\B U_1*_\B \overline{U}_1(-\cdot)\|_{L^2(\B)}\\
& \leq & c \|U_1*_\B U_1*_\B \overline{U}_1(-\cdot)\|_{L^2(\B)}.
\end{eqnarray*}
Next, we estimate
$$
\|U_1*_\B U_1*_\B \overline{U}_1(-\cdot)\|_{L^2(\B)} =
\eps^{5/2} \|\hat{B}*\hat{B}*\hat{\overline{B}}\|_{L^2(-3\eps^{r-1},3\eps^{r-1})}+\|E\|_{L^2(\B+k_0)},
$$
where
\begin{eqnarray}
\nonumber
E & := & 2W_1*_\B\hat{B}\left(\frac{\cdot-k_0}{\eps}\right)*_\B \hat{\overline{B}}\left(\frac{\cdot + k_0}{\eps}\right) + \hat{B}\left(\frac{\cdot-k_0}{\eps}\right)*_\B \hat{B}\left(\frac{\cdot-k_0}{\eps}\right)*_\B \overline{W_1}(-\cdot) \\
\label{E:E}
& \phantom{t}& \phantom{text} + \text{h.o.t.}
\end{eqnarray}
with
$$
W_1(k)=
\left\{
\begin{array}{ll}
\hat{C}\left(\frac{k-k_0}{\eps}\right), \quad & k-k_0 \in \B \\
U_1(k), \quad & k-k_0 \in \R \setminus \B
\end{array} \right.
$$
and with ``h.o.t.'' containing the remaining convolution terms, \ie~those quadratic and cubic in $W_1$.
A direct calculation yields
{\small $$
\begin{aligned}
&\left(W_1*_\B\hat{B}\left(\frac{\cdot-k_0}{\eps}\right)*_\B \hat{\overline{B}}\left(\frac{\cdot+k_0}{\eps}\right)\right)(k) = \\
& \sum_{n\in\{-1,0,1\}}\int\limits_{\stackrel{|l|<2\eps^r}{|k-l-k_0-n|<1/2}}\int\limits_{(-k_0-\eps^r,-k_0+\eps^r)\cap \B}\hat{C}\left(\frac{k-l-k_0-n}{\eps}\right)\hat{B}\left(\frac{l-t-k_0}{\eps}\right)\hat{\overline{B}}\left(\frac{t+k_0}{\eps}\right)\dd t \dd l,
\end{aligned}
$$}such that
$$
\left\|W_1*_\B\hat{B}\left(\frac{\cdot-k_0}{\eps}\right)*_\B
\hat{\overline{B}}\left(\frac{\cdot+k_0}{\eps}\right)\right\|_{L^2(\B+k_0)}
\leq c\eps^{5/2} \|\hat{C}\|_{L^2(\R)}\|\hat{B}\|_{L^1(\R)}^2,
$$
where we have used Young's inequality for convolutions. Next, we use the estimate
\beq\label{E:B_L1_L2s}
\|\hat{B}\|_{L^1(\R)}=\int_\R (1+|\kappa|)^{-s_B}(1+|\kappa|)^{s_B}|\hat{B}| \dd \kappa \leq c \|\hat{B}\|_{L^2_{s_B}(\R)},
\eeq
which holds for any $s_B>1/2$ because $\int_\R (1+|\kappa|)^{-2s_B} \dd k<\infty$ for $s_B>1/2$. Besides, due to the support of $\hat{C}$ we have
\beq\label{E:C_L2_L2s}
\|\hat{C}\|_{L^2(\R)} \leq \sup_{|\kappa|>\eps^{r-1}}(1+|\kappa|)^{-s_C}\|\hat{C}\|_{L^2_{s_C}(\R)}
\leq c\eps^{s_C(1-r)}\|\hat{C}\|_{L^2_{s_C}(\R)}
\eeq
for any $s_C>0$. With \eqref{E:B_L1_L2s} and \eqref{E:C_L2_L2s} we obtain
$$
\left\|W_1*_\B\hat{B}\left(\frac{\cdot-k_0}{\eps}\right)*_\B \hat{\overline{B}}\left(\frac{\cdot+k_0}{\eps}\right)\right\|_{L^2(\B+k_0)}\leq c\eps^{5/2+s_C(1-r)} \|\hat{C}\|_{L^2_{s_C}(\R)}\|\hat{B}\|_{L^2_{s_B}(\R)}^2
$$
for any $s_B > 1/2$ and $s_C > 0$.
Using similar computations for the higher-order terms in (\ref{E:E})
with the use of the estimate (\ref{E:B_L1_L2s}) for $\hat{C}$,
we obtain
\beq\label{E:E-est}
\|E\|_{L^2(\B+k_0)}\leq c\eps^{5/2+s_C(1-r)} \|\hat{C}\|_{L^2_{s_C}(\R)} \left(\|\hat{B}\|_{L^2_{s_B}(\R)}+\|\hat{C}\|_{L^2_{s_C}(\R)}\right)^2,
\eeq
for any $s_B > 1/2$ and $s_C > 1/2$. By using the estimate
$$
\|\hat{B}* \hat{B} * \hat{\overline{B}}\|_{L^2(-3\eps^{r-1},3\eps^{r-1})} \leq
\|\hat{B}\|_{L^2(\R)}\|\hat{B}\|^2_{L^1(\R)}\leq c\|\hat{B}\|_{L^2_{s_B}}^3,
$$
which follows from Young's inequality and estimate \eqref{E:B_L1_L2s},
we arrive at
\beq
\label{E:Fu1-est}
\|F(\util_1)\|_{\cX_s}\leq c\eps^{5/2}\left(\|\hat{B}\|_{L^2_{s_B}(\R)} + \|\hat{C}\|_{L^2_{s_C}(\R)} \right)^3
\eeq
for any $s_B>1/2$ and  $s_C > 1/2$.

\subsection{Component $\util_{2,1}$}

We solve now equation (\ref{E:util21-eq}) for $\util_{2,1}$ under assumption \eqref{E:BC-est}.
Recall that the operator
\begin{equation}
\label{operator-L-k}
M_k := Q_k(L(k)-\omega_*-\eps^2\Omega)Q_k,
\end{equation}
is invertible in $Q_k \cX_s$ with a bounded inverse. Thanks to estimate \eqref{E:Fu1-est}
there exists a unique solution $\util_{2,1}$ of (\ref{E:util21-eq}) which satisfies
\beq\label{E:u21-est}
\|\util_{2,1}\|_{\cX_s} \leq P \eps^{5/2},
\eeq
where $P$ depends polynomially on $\|\hat{B}\|_{L^2_{s_B}(\R)}$ and $\|\hat{C}\|_{L^2_{s_C}(\R)}$.

\subsection{Component $\util_R$}

Next, we solve equation \eqref{E:utilR-eq} for $\util_R$
via a Banach fixed point argument with $\util_1$ satisfying \eqref{E:BC-est} and $\util_{2,1}$ given as above.
We write
\beq
\label{component-u_R}
\util_R = M_k^{-1}Q_k \left( F(\util_1) - F(\util_1+\util_{2,1}+\util_R) \right) = :G(\util_R)
\eeq
We show the contraction property of $G$ in
$$
B^{\cX_s}_{K\eps^\eta}:=\{f\in \cX_s: \quad \|f\|_{\cX_s}\leq K\eps^{\eta}\}
$$
for some $K>0$ and $\eta>0$ to be determined. First, using \eqref{E:alg_Xs}, we estimate
{\small
$$
\begin{aligned}
\| G(\util_R) \|_{\cX_s} \leq &c \left[\|\util_1\|_{\cX_s}^2\left(\|\util_{2,1}\|_{\cX_s}+\|\util_R\|_{\cX_s}\right)
+\|\util_{2,1}\|_{\cX_s}^2\left(\|\util_{1}\|_{\cX_s}+\|\util_R\|_{\cX_s}\right)\right.\\
&\left.+
\|\util_R\|_{\cX_s}^2\left(\|\util_{2,1}\|_{\cX_s}+\|\util_1\|_{\cX_s}\right)+
\|\util_1\|_{\cX_s}\|\util_{2,1}\|_{\cX_s}\|\util_R\|_{\cX_s}+
\|\util_{2,1}\|_{\cX_s}^3+\|\util_R\|_{\cX_s}^3\right].
\end{aligned}
$$}Together with \eqref{E:u1-est} and \eqref{E:u21-est} we obtain for $\util_R\in B^{\cX_s}_{K\eps^\eta}$
\beq
\| G(\util_R)\|_{\cX_s} \leq P \left(
\eps^{7/2}+\eps^{\eta+1}+\eps^{2\eta+1/2}+\eps^{3\eta} \right),\label{E:Fdif-est}
\eeq
where $P$ depends polynomially on $\|\hat{B}\|_{L^2_{s_B}(\R)}$ and $\|\hat{C}\|_{L^2_{s_C}(\R)}$.
Clearly, if $5/2 \leq \eta\leq 7/2$, then $G: B^{\cX_s}_{K\eps^\eta}\to B^{\cX_s}_{K\eps^\eta}$ and
\beq
\label{E:Fdif-est1}
\|G(\util_R)\|_{\cX_s}\leq K\eps^{7/2},
\eeq
with $K$ dependent on $\|\hat{B}\|_{L^2_{s_B}(\R)}$ and $\|\hat{C}\|_{L^2_{s_C}(\R)}$.
We set $\eta=7/2$ for a balance.

For the contraction estimate, we consider $\util:=\util_1+\util_{2,1}+\util_R$ and $\vtil:=\util_1+\util_{2,1}+\vtil_R$.
A straightforward calculation leads to
\begin{eqnarray*}
\| G(\util_R)-G(\vtil_R)\|_{\cX_s} & \leq &
P \left(\|\util_1\|_{\cX_s}^2+\|\util_{2,1}\|_{\cX_s}^2+\|\util_R\|_{\cX_s}^2+\|\vtil_R\|_{\cX_s}^2\right)\|\util_R-\vtil_R\|_{\cX_s} \\
& \leq & \eps P \|\util_R-\vtil_R\|_{\cX_s},
\end{eqnarray*}
such that the contraction holds for $\eps>0$ small enough.

By the Banach fixed-point theorem, there exists a unique solution to
equation (\ref{component-u_R}) for $\util_R$, which satisfies the estimate
\beq
\label{E:uR-est}
\|\util_R\|_{\cX_s}\leq K\eps^{7/2},
\eeq
where $K$ depends polynomially on $\|\hat{B}\|_{L^2_{s_B}(\R)}$ and $\|\hat{C}\|_{L^2_{s_C}(\R)}$.

\subsection{Component $\hat{C}$}

Equation \eqref{E:P-eq} on the compact support of $\hat{C}$ can be rewritten as
\beq
\label{E:C-eq}
\hat{C}\left(\frac{k-k_0}{\eps}\right)=(\omega_* + \eps^2\Omega - \omega_m(k))^{-1}
\left( 1-\chi_{(-\eps^r,\eps^r)}(k-k_0) \right)
\langle F(\util)(\cdot,k),p_m^*(\cdot,k) \rangle
\eeq
with $k-k_0 \in \B\setminus (-\eps^r,\eps^r)$. Recall the decomposition
$\util_1(x,k)=\util_{1,B}(x,k)+\util_{1,C}(x,k)$, where for $k - k_0 \in \B$ we have
$$
\util_{1,B}(x,k) = \hat{B}\left(\frac{k-k_0}{\eps}\right)p_m(x,k), \quad
\util_{1,C}(x,k)=\hat{C}\left(\frac{k-k_0}{\eps}\right)p_m(x,k).
$$
Equation \eqref{E:C-eq} can be rewritten in the form
\beq\label{E:u1C-eq}
\util_{1,C}(x,k)= H(\util_{1,C})(x,k), \qquad k-k_0 \in \B\setminus (-\eps^r,\eps^r),
\eeq
where
$$
\begin{aligned}
H(\util_{1,C})(x,k):= \nu(k)&\left(1-\chi_{(-\eps^r,\eps^r)}(k-k_0)\right)\left[h^{-1}(k)\langle h(k)F(\util_{1,B})(\cdot,k),p_m^*(\cdot,k) \rangle \right.\\
&\left.+ \langle (F(\util)-F(\util_{1,B}))(\cdot,k),p_m^*(\cdot,k)\rangle\right] p_m(x,k),
\end{aligned}
$$
with $\nu(k):=(\omega_* + \eps^2\Omega - \omega_m(k))^{-1}$ and $h(k):=\left(1+ \eps^{-1} |k-k_0| \right)^{s_B}$
for some $s_B > 1/2$ to be determined. We note that
$$
\sup_{k-k_0\in \B\setminus (-\eps^r,\eps^r)}\nu(k)= c\eps^{-2r} \quad \mbox{\rm and} \quad
\sup_{k-k_0\in \B\setminus (-\eps^r,\eps^r)}h^{-1}(k)=c\eps^{(1-r)s_B}.
$$

The first term in $H(\util_{1,C})$ denoted as $T_1$ is estimated as follows
$$
\begin{aligned}
\|T_1\|_{\cX_s} &\leq c\eps^{(1-r)s_B-2r}\|hF(\util_{1,B})\|_{\cX_s}\\
&\leq c\eps^{(1-r)s_B-2r}
\left\| h \hat{B}\left(\frac{\cdot-k_0}{\eps}\right)*_\B \hat{B}\left(\frac{\cdot-k_0}{\eps}\right)*_\B \hat{\overline{B}}\left(\frac{\cdot+k_0}{\eps}\right) \right\|_{L^2(k_0-3\eps^r,k_0+3\eps^r)}\\
&\leq c\eps^{5/2+(1-r)s_B-2r}\left\|\hat{B}*\hat{B}*\hat{\overline{B}}\right\|_{L^2_{s_B}(-3\eps^{r-1},3\eps^{r-1})}\\
&\leq c\eps^{5/2+(1-r)s_B-2r}\|\hat{B}\|_{L^2_{s_B}(-3\eps^{r-1},3\eps^{r-1})}^3,
\end{aligned}
$$
if $s_B > 1/2$. The last inequality follows from $\hat{B}*\hat{B}*\hat{\bar{B}}=\widehat{|B|^2B}$,
the fact that the Fourier transform $\mathcal{F}$ is an isomorphism
$\mathcal{F}:H^s(\R)\to L^2_s(\R)$ for any $s\geq 0$ and from the algebra property of $H^s$ for $s>1/2$.

The second term in $H(\util_{1,C})$, denoted as $T_2$,
is estimated with the help of the algebra property \eqref{E:alg_Xs} of $\cX_s$ for $s>1/2$,
{\small $$
\begin{aligned}
\| T_2 \|_{\cX_s}\leq c\eps^{-2r} &\left[\|\util_{1,C}\|_{\cX_s}^3+\|\util_{1,C}\|_{\cX_s}^2(\|\util_{1,B}\|_{\cX_s}+\|\util_2\|_{\cX_s}) \right.\\
&\left. + \|\util_{1,C}\|_{\cX_s}(\|\util_{1,B}\|_{\cX_s}^2+\|\util_2\|_{\cX_s}^2) +\|\util_{1,B}\|_{\cX_s}^2\|\util_2\|_{\cX_s}+\|\util_{1,B}\|_{\cX_s}\|\util_2\|_{\cX_s}^2+\|\util_2\|_{\cX_s}^3
\right].
\end{aligned}
$$}We have the equivalence
\beq\label{E:u1B_B_equiv}
c_1 \eps^{1/2}\|\hat{B}\|_{L^2(\eps^{-1}\B)} \leq \|\util_{1,B}\|_{\cX_s} \leq c_2 \eps^{1/2}\|\hat{B}\|_{L^2(\eps^{-1}\B)}
\eeq
and
\beq\label{E:u1C_C_equiv}
c_1 \eps^{1/2}\|\hat{C}\|_{L^2(\eps^{-1}\B)} \leq \|\util_{1,C}\|_{\cX_s} \leq c_2 \eps^{1/2}\|\hat{C}\|_{L^2(\eps^{-1}\B)}
\eeq
for some $c_1,c_2>0$. Using \eqref{E:u21-est} and \eqref{E:uR-est}, we further have
$$
\begin{aligned}
\|\util_2\|_{\cX_s} \leq K \eps^{5/2},
\end{aligned}
$$
where $K$ depends polynomially on $\|\hat{B}\|_{L^2_{s_B}}$ and $\|\hat{C}\|_{L^2_{s_C}}$. As a result, we obtain
$$
\| T_2 \|_{\cX_s}\leq \rho \eps^{-2r}  \left( \|\util_{1,C}\|_{\cX_s}^3+\eps^{1/2}\|\util_{1,C}\|_{\cX_s}^2+ \eps\|\util_{1,C}\|_{\cX_s} +\eps^{7/2}\right) + \text{h.o.t.},
$$
where $\rho$ depends on $\|\hat{B}\|_{L^2_{s_B}}$ only, $\rho = \mathcal{O}(1)$ as $\eps\to 0$,
and where h.o.t. includes terms of higher order in $\eps$ or higher powers of $\|\util_{1,C}\|_{\cX_s}$.

Combining the estimates for $T_1$ and $T_2$, we obtain
\begin{eqnarray*}
\|H(\util_{1,C})\|_{\cX_s} & \leq & \rho \left( \eps^{5/2+(1-r)s_B-2r} + \eps^{7/2-2r} + \eps^{1-2r}\|\util_{1,C}\|_{\cX_s} +
\eps^{1/2-2r}\|\util_{1,C}\|_{\cX_s}^2 \right. \\ & \phantom{t} & \left.
+\eps^{-2r}\|\util_{1,C}\|_{\cX_s}^3\right) + \text{h.o.t.}
\end{eqnarray*}
Similarly one gets
\begin{eqnarray*}
\|H(\util_{1,C})-H(\vtil_{1,C})\|_{\cX_s} & \leq & \rho \left( \eps^{1-2r}+\eps^{1/2-2r}\left(\|\util_{1,C}\|_{\cX_s}+\|\vtil_{1,C}\|_{\cX_s}\right) \right. \\
& \phantom{t} & \left. +\eps^{-2r}\left(\|\util_{1,C}\|_{\cX_s}^2+\|\vtil_{1,C}\|_{\cX_s}^2\right)\right) \|\util_{1,C}-\vtil_{1,C}\|_{\cX_s}.
\end{eqnarray*}
Thus, the contraction holds for $\util_{1_C}$ in the ball
\beq\label{E:u1C-est}
\|\util_{1_C}\|_{\cX_s}\leq \rho \left( \eps^{5/2+(1-r)s_B-2r} + \eps^{7/2-2r} \right)
\eeq
if $r\in (0,1/2)$ and if $\eps>0$ is small enough.
By the Banach fixed-point theorem, there exists a unique $\util_{1_C}$
satisfying equation (\ref{E:u1C-eq}) and the bound (\ref{E:u1C-est}).

We also get an estimate for $\|\hat{C}\|_{L^2(\eps^{-1}\B)}$ by using \eqref{E:u1C_C_equiv},
\beq\label{E:C_est}
\|\hat{C}\|_{L^2(\eps^{-1} \B)} \leq \rho \left( \eps^{2+(1-r)s_B-2r} + \eps^{3-2r} \right),
\eeq
and an estimate of $\|\hat{C}\|_{L^2_{s_C}(\eps^{-1}\B)}$ by using
\beq\label{E:C_L2xi}
\|\hat{C}\|_{L^2_{s_C}(\eps^{-1}\B)}\leq \sup_{|\kappa|<\tfrac{1}{2\eps}}(1+|\kappa|)^{s_C}\|\hat{C}\|_{L^2(\eps^{-1}\B)}
\leq \rho \left( \eps^{2+(1-r)s_B-2r - s_C} + \eps^{3-2r - s_C} \right),
\eeq
where $\rho$ in both estimates \eqref{E:C_est} and \eqref{E:C_L2xi} depends polynomially on $\|\hat{B}\|_{L^2_{s_B}}$.

\subsection{Component $\hat{B}$}

Finally, we turn to the leading order component $\hat{B}(\tfrac{k-k_0}{\eps})p_m(x,k)$
of the solution $\util(x,k)$ and prove the existence of $\hat{B}$ close to $\hat{A}$,
a solution of the effective amplitude equation (\ref{E:SNLS-Fourier}).

Equation \eqref{E:P-eq} on the compact support of $\hat{B}$ can be rewritten as
\beq\label{E:B-eq}
(\omega_m(k)-\omega_*-\eps^2\Omega) \hat{B}\left(\frac{k-k_0}{\eps}\right) + \chi_{(-\eps^r,\eps^r)}(k-k_0)
\langle F(\util)(\cdot,k),p_m^*(\cdot,k) \rangle = 0
\eeq
with $k-k_0 \in(-\eps^r,\eps^r)$. Once again, we use a fixed point argument
to solve for $\hat{B}$. In order to close the procedure for constructing
$\util$, we need the constants in all estimates to depend only on norms of $\hat{B}$
and not on norms of $\hat{C}$. As all constants are polynomials
in $\|\hat{B}\|_{L^2_{s_B}}$ and $\|\hat{C}\|_{L^2_{s_C}}$,
we can employ \eqref{E:C_L2xi} to get rid of the dependence on $\|\hat{C}\|_{L^2_{s_C}}$.
We need to ensure, however,
$$
\min\{2+(1-r)s_B-2r-s_C,3-2r-s_C\}>0.
$$
Hence, $s_C > 1/2$ is further restricted by
$$
s_C<\min\{2+(1-r)s_B-2r,3-2r\}.
$$
Since $r<1/2$ and $s_B>1/2$, we can choose $s_C = 1 > 1/2$ to satisfy this restriction.

As we show below, the reduced bifurcation equation \eqref{E:B-eq} is a perturbation of
equation (\ref{E:SNLS-Fourier}).  To this end, we expand the band function
satisfying assumption \ref{ass:band} by
$$
\omega_m(k) = \omega_* + \frac{1}{2} \omega_m''(k_0) (k-k_0)^2 + \omega_r(k),
$$
where the remainder term satisfies the cubic estimate
\begin{equation}
\label{cubic-estimate}
|\omega_r(k)|\leq c|k-k_0|^3 \text{ for all  } k\in \B+k_0.
\end{equation}
By substituting this decomposition into equation \eqref{E:B-eq},
we can rewrite the problem for $\hat{B}$ in the form
\begin{eqnarray}
\nonumber
& \left( \frac{1}{2} \omega_m''(k_0) \left(\frac{k-k_0}{\eps}\right)^2 - \Omega \right) \hat{B}\left(\frac{k-k_0}{\eps}\right)
+ \eps^{-2}\chi_{(-\eps^r,\eps^r)}(k-k_0) \langle F(\util_1)(\cdot,k),p_m^*(\cdot,k) \rangle  \\
&
+ \eps^{-2}\chi_{(-\eps^r,\eps^r)}(k-k_0) \left[ \langle (F(\util)-F(\util_1))(\cdot,k),p_m^*(\cdot,k) \rangle - \omega_r(k)\hat{B}\left(\frac{k-k_0}{\eps}\right)\right] = 0.
\label{E:B-eq2}
\end{eqnarray}
The second term in equation (\ref{E:B-eq2}) recovers the nonlinearity coefficient in equation \eqref{E:SNLS-Fourier}.
Indeed, when we isolate the $\hat{B}$-component in $U_1$ and approximate all Bloch waves by those at $k=k_0$, we get
\begin{eqnarray*}
\langle F(\util_1)(\cdot,k),p_m^*(\cdot,k) \rangle & = & \Gamma\left(\hat{B}\left(\frac{\cdot-k_0}{\eps}\right)*_\B \hat{B}\left(\frac{\cdot-k_0}{\eps}\right)*_\B \hat{\overline{B}}\left(\frac{\cdot+k_0}{\eps}\right)\right)(k) \\
& \phantom{t} & \phantom{text} + \Gamma E(k) + H(k),
\end{eqnarray*}
where $\Gamma$ is given by (\ref{Gamma}), $E(k)$ is given in \eqref{E:E}, and
$$
H(k):=\int_\B\int_\B\left(\beta(k,k-l,l-t,t) - \Gamma\right)U_1(k-l)U_1(l-t)\overline{U_1}(-t)\dd t\dd l
$$
with
$$
\beta(k,k-l,l-t,t):= \langle\sigma p_m(\cdot,k-l)p_m(\cdot,l-t)\overline{p_m(\cdot,-t)},p_m^*(\cdot,k)\rangle.
$$
Note that $\Gamma = \beta(k_0,k_0,k_0,-k_0)$.

Next, we show the smallness of $E$, $H$ and the terms in the square brackets in \eqref{E:B-eq2}.
With the help of (\ref{cubic-estimate}) we obtain
\begin{eqnarray*}
\left\|\omega_r(k_0+\eps \cdot)\hat{B}\right\|^2_{L^2(\eps^{-1}\B)} &\leq &
c\eps^{6}\int_{-\eps^{r-1}}^{\eps^{r-1}}|\hat{B}(\kappa)|^2|\kappa|^6\dd\kappa\\
&\leq & c\eps^{6} \sup_{|\kappa|<\eps^{r-1}}\frac{|\kappa|^6}{(1+|\kappa|)^4}
\int_{-\eps^{r-1}}^{\eps^{r-1}}(1+|\kappa|)^{4}|\hat{B}(\kappa)|^2\dd\kappa \\
& \leq & c\eps^{4+2r}\|\hat{B}\|_{L^2_2(\R)}^2,
\end{eqnarray*}
so that
\beq
\label{estimate-omega-r}
\left\|\omega_r(k_0+\eps \cdot)\hat{B}\right\|_{L^2(\eps^{-1}\B)} \leq  c\eps^{2+r}\|\hat{B}\|_{L^2_2(\R)}.
\eeq
Estimate (\ref{estimate-omega-r}) dictates the choice of $s_B$, namely $s_B = 2 > 1/2$. Hence, from now on we work with
\[
s_B=2 \text{ and } s_C=1,
\]
in addition to $r < 1/2$.

To estimate $H$, we substitute the ansatz $U_1(k)=\hat{D}\left(\tfrac{k-k_0}{\eps}\right)$ for $k\in \B+k_0$ into $H(k)$
and use the transformations $t':=\eps^{-1}(t+k_0-m_3), l':=\eps^{-1}(l+m_2-m_3)$ and $\kappa:=\eps^{-1}(k-k_0)$. Then we get
\begin{eqnarray*}
H(k_0+\eps\kappa) = \eps^2 \sum_{m_{1,2,3}\in \{0,1\}}
\int\limits_{\eps^{-1}(\B+m_2-m_3)} \int\limits_{\eps^{-1} (\B+k_0-m_3)}  g \dd t'\dd l',
\end{eqnarray*}
where
\begin{eqnarray*}
g & = & \left(\beta(k_0+\eps \kappa,k_0+\eps(\kappa-l'),k_0+\eps(l'-t'),-k_0+\eps t') - \Gamma \right) \\
& \phantom{t} & \times \chi_{\eps^{-1}(\B-m_1-m_2+m_3)}(\kappa-l')\hat{D}\left(\kappa-l'+\eps^{-1}(m_1+m_2-m_3) \right)\\
& \phantom{t} & \times \chi_{\eps^{-1}\B}(l'-t')\hat{D}\left(l'-t'\right) \chi_{\eps^{-1}\B}(t')\hat{\overline{D}}\left(t'\right).
\end{eqnarray*}
Due to the analyticity of $k\mapsto p_m(\cdot,k)$ (recall that the eigenvalue family $\omega_m$ is simple) the coefficient $\beta$ is certainly Lipschitz continuous in each variable and we have
\begin{eqnarray*}
& \phantom{t} & \left|\beta(k_0+\eps \kappa,k_0+\eps(\kappa-l'),k_0+\eps(l'-t'),-k_0+\eps t')-\beta(k_0,k_0,k_0,-k_0)\right| \\
&\leq & c\eps(|\kappa|+|\kappa-l'|+|l'-t'|+|t'|) \\
& \leq & 2c\eps(|\kappa-l'|+|l'-t'|+|t'|).
\end{eqnarray*}
This leads to the estimate
$$
|H(k_0+\eps\kappa)|\leq c\eps^3 \left(2|h\hat{D}|*|\hat{D}|*|\hat{\overline{D}}|
+ |\hat{D}|*|\hat{D}|*|h\hat{\overline{D}}|\right)(\kappa) \ \text{with } h(\kappa):=\kappa,
$$
where $*$ is the convolution over the whole real line.
Applying Young's inequality for convolutions, we have
\begin{eqnarray*}
\|H(k_0+\eps \cdot)\|_{L^2(\eps^{-1}\B)} \leq c\eps^{3}\|\hat{D}\|_{L^2_1(\R)}\|\hat{D}\|_{L^1(\R)}^2
\leq c\eps^{3} \left( \|\hat{B}\|_{L^2_1(\R)} +\|\hat{C}\|_{L^2_1(\R)} \right)^3,
\end{eqnarray*}
where we have used estimate \eqref{E:B_L1_L2s} and the triangle inequality.
Employing now estimate \eqref{E:C_L2xi} with $r < 1/2$, $s_B = 2$, and $s_C = 1$, we finally obtain
\beq\label{E:H-est}
\|H(k_0+\eps \cdot)\|_{L^2(\eps^{-1}\B)} \leq \rho \eps^{3},
\eeq
where $\rho$ depends on $\|\hat{B}\|_{L^2_{s_B}}$ only.

To estimate $E$, we use \eqref{E:E-est} and (\ref{E:C_L2xi}) again and obtain for
$r < 1/2$, $s_B = 2$, and $s_C=1$
$$
\|E(k_0+\eps \cdot)\|_{L^2(\eps^{-1}\B)} \leq \rho \eps^{4},
$$
where $\rho$ depends on $\|\hat{B}\|_{L^2_{s_B}(\R)}$ only.

By using \eqref{E:Fdif-est1} and (\ref{E:C_L2xi}) again,
we obtain for $\delta F(k):= \langle \left( F(\util)-F(\util_1)\right)(\cdot,k),p_m^*(\cdot,k) \rangle$,
$$
\|\delta F(k_0+\eps \cdot)\|_{L^2(\eps^{-1}\B)}\leq \rho \eps^{3}
$$
where $\rho$ depends on $\|\hat{B}\|_{L^2_{s_B}(\R)}$ only.

Comparing estimates for $E$, $H$ and the terms in the square bracket in \eqref{E:B-eq2},
we conclude that the estimate (\ref{estimate-omega-r}) yields the leading order term.
In summary, equation \eqref{E:B-eq2} reads
\beq\label{E:Beq_R}
\left(\frac{1}{2} \omega_m''(k_0) \kappa^2-\Omega\right) \hat{B}(\kappa) +
\Gamma (\hat{B}*\hat{B}*\hat{\overline{B}})(\kappa) = \hat{R}(\hat{B})(\kappa),
\eeq
where $\kappa\in I_{\eps^{r-1}}:=(-\eps^{r-1},\eps^{r-1})$ and the remainder term satisfies
\beq\label{E:R-est}
\|\hat{R}(\hat{B})\|_{L^2(\eps^{-1}\B)}\leq \rho \eps^r,
\eeq
with $\rho$ depending on $\|\hat{B}\|_{L^2_{2}(\R)}$ only.

Equation \eqref{E:Beq_R} is a perturbed stationary NLS equation written in the Bloch
form on the compact support. In the following, we prove the existence of solutions
$\hat{B}$ to equation \eqref{E:Beq_R} close to $\chi_{I_{\eps^{r-1}}}\hat{A}$, where
$\hat{A}$ satisfies \eqref{E:SNLS-Fourier}.

We define
$$
F_{\text{NLS}}(\hat{B}):=\left(\frac{1}{2} \omega_m''(k_0) \kappa^2-\Omega\right) \hat{B}
+ \Gamma (\hat{B}*\hat{B}*\hat{\overline{B}})
$$
and write \eqref{E:Beq_R} as
$$
\FNLS(\hat{B})(\kappa)=\hat{R}(\hat{B})(\kappa), \quad \kappa \in I_{\eps^{r-1}},
$$
where $\supp(\hat{B})\subset I_{\eps^{r-1}}$. Letting
$$
\hat{B}=\hat{A}_\eps+\hat{b}, \text{  where } \hat{A}_\eps:= \chi_{I_{\eps^{r-1}}} \hat{A} \text{ and }\supp(\hat{b})\subset I_{\eps^{r-1}},
$$
we finally reformulate \eqref{E:Beq_R} as
\beq\label{E:b-eq}
\hat{J}_\eps \hat{b} = W(\hat{b}),
\eeq
where
\begin{eqnarray*}
\hat{J}_\eps(\kappa) & := & \chi_{I_{\eps^{r-1}}}(\kappa) D_{\hat{A}}\FNLS(\hat{A}_{\eps})(\kappa) \chi_{I_{\eps^{r-1}}}(\kappa), \\
W(\hat{b})(\kappa) & := & \chi_{I_{\eps^{r-1}}}(\kappa) \hat{R}(\hat{A}_\eps +\hat{b})(\kappa) -
\chi_{I_{\eps^{r-1}}}(\kappa) \left(\FNLS(\hat{A}_\eps+\hat{b})-\hat{J}_\eps\hat{b}\right)(\kappa).
\end{eqnarray*}
Here $D_{\hat{A}}\FNLS(\hat{A})$ is a symbolic notation for the Jacobian of $\FNLS$. Note that
$\FNLS$ is not complex differentiable but it is differentiable in real variables (after isolating the real and imaginary parts).

The Taylor expansion yields
$$
\FNLS(\hat{A}_\eps+\hat{b}) - \hat{J}_\eps\hat{b} = \FNLS(\hat{A}_\eps) +
(D_{\hat{A}}\FNLS(\hat{A}_\eps)-\hat{J}_\eps)\hat{b}+Q(\hat{b}),
$$
where $Q$ is quadratic in $\hat{b}$. The term $\FNLS(\hat{A}_\eps)$ does not vanish exactly
due to the convolution structure of the nonlinearity but we have
$$
\FNLS(\hat{A}_\eps) = \Gamma\left(\hat{A}_\eps*\hat{A}_\eps*\hat{\overline{A}}_\eps - \hat{A}*\hat{A}*\hat{\overline{A}}\right),
$$
where the right-hand-side includes terms of the form
$$
\hat{a}_\eps*\hat{A}_\eps*\hat{\overline{A}}_\eps, \text{ where }
\hat{a}_\eps:=\hat{A}-\hat{A}_\eps=(1-\chi_{I_{\eps^{r-1}}})\hat{A}
$$
and terms quadratic and cubic in $\hat{a}_\eps$. Similarly to estimates of $\hat{C}$,
we have
\begin{eqnarray*}
|\hat{a}_\eps(\kappa)| & \leq & (1+|\kappa|)^{s_A} |\hat{a}_\eps(\kappa)|\sup_{|\kappa|>\eps^{r-1}}(1+|\kappa|)^{-s_A}\\
 & \leq & c\eps^{s_A(1-r)}(1+|\kappa|)^{s_A} |\hat{a}_\eps(\kappa)|,
\end{eqnarray*}
where $s_A > 0$ is to be specified. By using Young's inequality, we obtain
\begin{eqnarray*}
\|\hat{a}_\eps*\hat{A}_\eps*\hat{\overline{A}}_\eps\|_{L^2(\R)} & \leq &
c\eps^{s_A(1-r)} \|(1+|\cdot|)^{s_A} \hat{A}\|_{L^2(\R)}\|\hat{A}_\eps\|_{L^1(\R)}^2 \\
& \leq & c\eps^{s_A(1-r)} \|\hat{A}\|_{L^2_{s_A}(\R)}^3,
\end{eqnarray*}
where the last estimate holds if $s_A>1/2$, see \eqref{E:B_L1_L2s}.
Thus, we have
\beq\label{E:FNLS-est}
\|\FNLS(\hat{A}_\eps)\|_{L^2(\R)}\leq c \eps^{s_A(1-r)} \|\hat{A}\|_{L^2_{s_A}(\R)}^3.
\eeq
Similarly, we obtain
\beq\label{E:JNLS-est}
\|(\chi_{I_{\eps^{r-1}}} D_{\hat{A}}\FNLS(\hat{A}_\eps)-\hat{J}_\eps) \hat{b} \|_{L^2(\R)}
\leq c\eps^{s_A(1-r)}\|\hat{A}\|_{L^2_{s_A}(\R)}^2\|\hat{b}\|_{L^2_{s_B}(\R)}
\eeq
for any $s_A>1/2.$

Combining \eqref{E:R-est}, \eqref{E:FNLS-est}, and \eqref{E:JNLS-est}, we have
$$
\|W(\hat{b})\|_{L^2(\R)} \leq c_A \left(\eps^r+\eps^{s_A(1-r)}+(\eps^r+\eps^{s_A(1-r)})\|\hat{b}\|_{L^2_2(\R)}+\|\hat{b}\|_{L^2_2(\R)}^2+\|\hat{b}\|_{L^2_2(\R)}^3\right),
$$
where $c_A$ depends on $\|\hat{A}\|_{L^2_{s_A}}$ only. Clearly, if $s_A\geq r/(1-r) \geq 1$ (so that $s_A \geq 1$ is used from now on),
then there exist positive constants $c_1$ and $c_2$ that only depend on $\|\hat{A}\|_{L^2_{s_A}}$ such that for all $\eps>0$ small enough
\beq
\label{fixed-1}
W:B^{2,2}_{c_1\eps^r}\to B^{2,0}_{c_1\eps^r},
\eeq
and
\beq
\label{fixed-2}
\|W(\hat{b}_1)-W(\hat{b}_2)\|_{L^2(\R)} \leq c_2\eps^r\|\hat{b}_1-\hat{b}_2\|_{L_2^2(\eps^{-1}\B)} \text{ for all } \hat{b}_1,\hat{b}_2 \in  B^{2,2}_{c_1\eps^r},
\eeq
where
\beq
\label{fixed-3}
B^{2,2}_{c_1\eps^r}:=\{\hat{b}\in L_2^2(\eps^{-1}\B): \|\hat{b}\|_{L_2^2(\eps^{-1}\B)} \leq c_1\eps^r\}.
\eeq

We wish to solve \eqref{E:b-eq} in $B^{2,2}_{c_1\eps^r}$ via the Banach fixed point iteration by writing
$\hat{b}=\hat{J}_\eps^{-1} W(\hat{b})$, where
$$
\hat{J}_\eps^{-1} : \quad L^2(\eps^{-1}\B) \to L^2_2(\eps^{-1}\B).
$$
The operator $\hat{J}_\eps^{-1}$ is, however, not bounded uniformly in $\eps$ (in a neighborhood of $\eps=0$)
because the Jacobian $\hat{J}_0 :=D_{\hat{A}}F_\text{NLS}(\hat{A})$ has a nontrivial kernel due to the shift and phase invariances of the
stationary NLS equation (\ref{E:SNLS}).

Indeed, as is well known (see Chapter 4 in \cite{Peli}),
the Jacobian $J_0$ at a bound state $A$ of the stationary NLS equation (\ref{E:SNLS}) is a diagonal operator of
two Schr\"{o}dinger operators
$$
L_+ := -\frac{1}{2} \omega_m''(k_0) \frac{d^2}{d X^2} - \Omega + 3 \Gamma |A(X)|^2 : \quad H^2(\R) \to L^2(\R)
$$
and
$$
L_- := -\frac{1}{2} \omega_m''(k_0) \frac{d^2}{d X^2} - \Omega + \Gamma |A(X)|^2 : \quad H^2(\R) \to L^2(\R)
$$
which act on the real and imaginary parts of the perturbation to $A$.
By the shift and phase invariances, both operators have kernels, namely
$$
{\rm Ker}(L_+) = {\rm span}\left(\frac{dA}{dX}\right) \quad \mbox{\rm and} \quad
{\rm Ker}(L_-) = {\rm span}(A),
$$
and the simple zero eigenvalue of $L_+$ and $L_-$ is isolated from the rest
of their spectra. By using the Fourier transform and the dualism between $H^2(\R)$ and $L^2_2(\R)$ spaces,
these facts imply that if $A \neq 0$ is a bound state to equation (\ref{E:SNLS}) in $H^{s_A}(\R)$ with $s_A \geq 1$,
then the kernel of $\hat{J}_0 : L^2_2(\R) \to L^2(\R)$ is two-dimensional and the double zero eigenvalue
is bounded away from the rest of the spectrum of $\hat{J}_0$. In a suitably selected subspace defined by a symmetry of the
stationary NLS equation (\ref{E:SNLS}), such that the invariances do not hold within this subspace,
the Jacobian $\hat{J}_0$ is invertible. The two invariances are avoided if we restrict to $\PT$-symmetric $A$ and $b$, \ie
$$
A(-x)=\overline{A(x)}, \quad b(-x)=\overline{b(x)},
$$
or equivalently
$$
\hat{A}(\kappa), \hat{b}(\kappa)\in \R \text{ for all }\kappa\in \R.
$$

Hence, for any given $\PT$-symmetric solution $A$ to equation \eqref{E:SNLS},
we consider a solution to the fixed-point equation \eqref{E:b-eq} in
$B^{2,2}_{c_1 \eps^r}$ for real $\hat{b}$. By the $\PT$-symmetry of the
original problem (\ref{E:nls}), all components of the decomposition of $u$
inherit the $\PT$-symmetry if $\hat{b}$ is real, so that
if $\hat{b}$ is real, then $\hat{J}_\eps^{-1}W(\hat{b})$ is real,
and the fixed-point equation \eqref{E:b-eq} is closed
in the space of $\PT$-symmetric solutions. Then, thanks to (\ref{fixed-1}), (\ref{fixed-2}), and
(\ref{fixed-3}), there exists a unique real solution $\hat{b} \in B^{2,2}_{c_1 \eps^r}$
of the fixed-point equation (\ref{E:b-eq}).

In order to understand the above inheritance property in more detail,
note that $u$ is $\PT$-symmetric if and only of $\util(\cdot,k)$ is $\PT-$symmetric for all $k\in\B$.
Hence, we can check the inheritance in the Bloch variable $\util$.
Clearly, we need to only check that $M_k$ and $F$ commute with the $\PT-$symmetry,
where $M_k$ is given by (\ref{operator-L-k}). We write
$$
\PT(\util)(x,k):=\overline{\util}(-x,k).
$$
For $M_k$ first note that $Q_k\PT=\PT Q_k$ because the eigenfunctions $p_m(\cdot,k)$ and $p_m^*(\cdot,k)$ are $\PT$-symmetric, such that
$$
\begin{aligned}
Q_k\PT(\util)(x,k)&=\PT(\util)(x,k)-\langle\PT(\util)(\cdot,k),p_m^*(\cdot,k)\rangle p_m(x,k)\\
&=\PT(\util)(x,k)-\langle\PT(\util)(\cdot,k),\PT(p_m^*)(\cdot,k)\rangle \PT (p_m)(x,k)\\
&=\PT(\util)(x,k)-\overline{\langle\util(\cdot,k),p_m^*(\cdot,k)\rangle} \PT (p_m)(x,k)\\
&=\PT (Q_k\util)(x,k).
\end{aligned}
$$
Due to the $\PT$-symmetry of $V$ we get also $L(k)\PT(\util) = \PT (L(k)\util).$ As a result $M_k\PT=\PT M_k$. Similarly, due to the $\PT-$symmetry of $\sigma$ we get $F(\PT(\util))=\PT(F(\util))$.

Therefore, for each component of $\util$ the Banach fixed point argument can be carried out in the $\PT$-symmetric subspace. All the resulting components of the decomposition are $\PT$-symmetric and hence the full solution $u$ is $\PT$-symmetric.

\subsection{Difference between the formal ansatz and $\util_{1,B}$}

To prove the inequality in Theorem \ref{theorem-main}, it remains to estimate the difference $u_{\text{form}}-\util_{1,B}$.
The formal ansatz $u_\text{form}$ in \eqref{E:ans-formal} translates in Bloch variables to the decomposition (\ref{E:uform_Bloch}).
We seek now an estimate of $\|\util_{\text{form}}-\util_{1,B}\|_{\cX_s}$, where  for all $k-k_0 \in \B$ we have
$$
\begin{aligned}
\util_{1,B}(x,k)=\hat{B}\left(\tfrac{k-k_0}{\eps}\right)p_m(x,k) = \left(\chi_{(-\eps^r,\eps^r)}(k-k_0)\hat{A}\left(\tfrac{k-k_0}{\eps}\right)+\hat{b}\left(\tfrac{k-k_0}{\eps}\right)\right)p_m(x,k).
\end{aligned}
$$
The estimate is carried out as follows:
\begin{eqnarray*}
\|\util_{\text{form}}-\util_{1,B}\|_{\cX_s}^2 & \leq & c\eps \|\hat{b}\|_{L^2(\eps^{-1}\B)}^2+\int_{k_0-\eps^r}^{k_0+\eps^r}\left|\hat{A}\left(\tfrac{k-k_0}{\eps}\right)\right|^2 \|p_m(\cdot,k)-p_m(\cdot,k_0)\|_{H^s(0,2\pi)}^2\dd k\\
& \phantom{t} & + \int_{\B+k_0}(1-\chi_{k_0-\eps^r,k_0+\eps^r}(k))\left|\hat{A}\left(\tfrac{k-k_0}{\eps}\right)\right|^2\dd k \|p_m(\cdot,k_0)\|_{H^s(0,2\pi)}^2\\
& \phantom{t} & +\sum_{j\in \Z\setminus\{0\}}\int_{\B+k_0}
\left|\hat{A}\left(\tfrac{k-k_0+j}{\eps}\right)\right|^2\dd k \|p_m(\cdot,k_0)e^{\ii j\cdot}\|_{H^s(0,2\pi)}^2,
\end{eqnarray*}
from which we obtain
\begin{eqnarray*}
\|\util_{\text{form}}-\util_{1,B}\|_{\cX_s}^2 & \leq & c\eps\left(\|\hat{b}\|_{L^2(\eps^{-1}\B)}^2+\eps^2
\int\limits_{|\kappa|<\eps^{r-1}}|\kappa|^2|\hat{A}(\kappa)|^2\dd k \right. \\
& \phantom{t} & \left. +\sup_{|\kappa|>\eps^{r-1}}(1+|\kappa|)^{-2s_A}\int\limits_{|\kappa|>\eps^{r-1}}(1+|\kappa|)^{2s_A}|\hat{A}(\kappa)|^2\dd \kappa\right.\\
& \phantom{t} & \left.+\sum_{j\in \Z\setminus\{0\}}\sup_{\kappa\in \eps^{-1}(\B+j)}(1+|\kappa|)^{-2s_A}\int\limits_{\kappa\in \eps^{-1}(\B+j)}(1+|\kappa|)^{2s_A}|\hat{A}(\kappa)|^2\dd \kappa\right)
\end{eqnarray*}
and hence
\begin{eqnarray*}
\|\util_{\text{form}}-\util_{1,B}\|_{\cX_s}^2 \leq c\eps \left(\|\hat{b}\|_{L^2(\eps^{-1}\B)}^2+\eps^2\|\hat{A}\|_{L^2_1(\R)}^2 +\eps^{2s_A(1-r)}\|\hat{A}\|_{L^2_{s_A}(\R)}^2\right).
\end{eqnarray*}
Together with $\|\hat{b}\|_{L_2^2(\eps^{-1}\B)} \leq c_1\eps^r$ (recall that $\hat{b}\in \B^{2,2}_{c_1\eps^r}$), this
estimate yields
\beq
\label{E:u1B-err}
\|\util_{\text{form}}-\util_{1,B}\|_{\cX_s} \leq c_A \eps^{r+1/2},
\eeq
where the constant $c_A$ depends polynomially on $\|\hat{A}\|_{L^2_{s_A}(\R)}$ with $s_A\geq 1$.

Estimate \eqref{E:u1B-err} together with \eqref{E:u21-est}, \eqref{E:uR-est} and \eqref{E:u1C-est}
and the triangle inequality complete the proof of Theorem \ref{theorem-main}.

\section{The spectral assumption revisited}\label{S:linspec}

The proof of Theorem \ref{theorem-main} relies critically  on Assumption \ref{ass:band}
that the spectral band $[a,b]$ is real and isolated from the rest of the spectrum
of the Schr\"{o}dinger operator $\mathcal{L}$ in (\ref{Schrodinger}) (although this can be generalized as explained in Remark \ref{R:gen} in Section \ref{S:intro}).
In real periodic potentials, every spectral band is real but the two bands may touch
at a point with no spectral gap.

For our purposes we say that the point $(k_0,\mu)\in \B\times \R$ is a \textit{Dirac point} in the one-dimensional case if
two eigenvalue families $k\mapsto \omega_m(k)$ and $k\mapsto\omega_{m+1}(k)$ of
the spectral problem \eqref{E:Bloch-ev} are real on some neighborhood around $k=k_0$, if $\omega_m(k_0)=\omega_{m+1}(k_0)=\mu$ and if $\omega_m$ and $\omega_{m+1}$ are not differentiable at $k_0$.

Due to the Lipschitz continuity of all $\omega_m$ (as follows, e.g., by a direct modification of the proof for the Helmholtz equation in \cite{CV1997}), a Dirac point is where $\omega_m$ and $\omega_{m+1}$ are conical in shape. Moreover, at a Dirac point $(k_0,\mu)$  two linearly independent eigenvectors of the spectral problem (\ref{E:Bloch-ev})
exist and a system of two stationary nonlinear Dirac-type equations can be derived and justified with an analogous analysis as in the case of the stationary NLS equations \cite{PelSch} (see also Chapter 2 in \cite{Peli}).

In $\PT$-symmetric periodic potentials
with the honeycomb symmetry in two spatial dimensions, a necessary and sufficient condition was derived in \cite{Curtis1}
at the Dirac point by the perturbation theory that shows when the spectral bands remain real
under a complex-valued perturbation.

Here we iterate the same question for the $\PT$-symmetric potential $V$ in Assumption \ref{ass:Vsig}.
We derive perturbative results related to splitting of Dirac points, when the real periodic potential
is perturbed by a purely imaginary perturbation potential. Therefore, we represent
\begin{equation}
\label{complex-potential}
V(x) = U(x) + \ri \gamma W(x),
\end{equation}
where $\gamma \in \mathbb{R}$ is the perturbation parameter and the real potentials
$U, W \in L^{\infty}_{\rm per}(0,2\pi)$ satisfy the symmetry conditions
\begin{equation}
\label{PT-symmetry-final}
U(-x) = U(x), \quad W(-x) = -W(x), \quad \text{ for all } x\in \R.
\end{equation}
In what follows, we derive sufficient conditions for when the two real spectral bands
overlapping at a Dirac point $(k_0,\mu)$
become complex under a small perturbation. This leads to an instability of the zero solution in
the time-dependent NLS equation (\ref{nls}). At the same time Assumption \ref{ass:band} is no longer true and the formal approximation of bound states via the stationary NLS
equation (\ref{E:SNLS}) cannot be justified.

Let us first note an elementary result.

\begin{lemma}\label{L:even_odd_sol}
Fix $\gamma = 0$ and let $\mu = \omega(k_0)$ be a Dirac point of $L(k_0)$ for either $k_0 = 0$ or $k_0 = 1/2$.
The two linearly independent eigenfunctions $\phi_\pm$ of $L(k_0)$ can be chosen such that
$\phi_+(x)e^{\ri k_0 x}$ is real and even and $\phi_-(x)e^{\ri k_0 x}$ is real and odd.
\end{lemma}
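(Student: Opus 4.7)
The plan is to work in the Bloch variable $\psi(x) := \phi(x) e^{\ri k_0 x}$, since $\phi \in H^2_{\rm per}(0,2\pi)$ is an eigenfunction of $L(k_0)$ with eigenvalue $\mu$ if and only if $\psi$ solves the second-order ODE
\[
-\psi''(x) + U(x)\psi(x) = \mu\,\psi(x), \qquad x\in\R,
\]
together with the boundary condition $\psi(x+2\pi) = e^{2\pi \ri k_0}\psi(x)$. For $k_0 \in \{0,1/2\}$ the factor $e^{2\pi \ri k_0}\in\{\pm 1\}$ is real, so the boundary condition is \emph{real linear}. Both the ODE and the condition are invariant under complex conjugation $\psi\mapsto \overline{\psi}$ (because $U$ and $\mu$ are real) and under reflection $\psi\mapsto \psi(-\cdot)$ (because $U$ is even and antiperiodicity is preserved by $x\mapsto -x$).

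The main point, and the one step that needs genuine argument, is that at the Dirac point the monodromy matrix of the ODE equals $\pm I$. Indeed, at $\gamma = 0$ the operator $L(k_0)$ is self-adjoint, so the assumption that two eigenvalue families meet at $\mu$ forces the geometric multiplicity of $\mu$ to equal its algebraic multiplicity, namely $2$. Translating back to the ODE, the two-dimensional space of classical solutions on $\R$ coincides with the two-dimensional space of solutions satisfying the (anti)periodic boundary condition; this is exactly the statement that the monodromy matrix is $e^{2\pi \ri k_0}I = \pm I$.

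Now I choose the real fundamental system $\psi_1,\psi_2$ defined by the initial data
\[
\psi_1(0)=1,\quad \psi_1'(0)=0,\qquad \psi_2(0)=0,\quad \psi_2'(0)=1.
\]
Because the ODE has real coefficients, $\psi_1$ and $\psi_2$ are real. Because $U$ is even, ODE uniqueness applied to $\psi_1(-x)$ and $\psi_2(-x)$ yields that $\psi_1$ is even and $\psi_2$ is odd. By the preceding paragraph both already satisfy the required (anti)periodic boundary condition, so they are two linearly independent Bloch solutions with quasi-momentum $k_0$.

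Finally, I set $\phi_{+}(x) := \psi_1(x) e^{-\ri k_0 x}$ and $\phi_{-}(x) := \psi_2(x) e^{-\ri k_0 x}$. A direct computation using $e^{\pm 2\pi \ri k_0}=\pm 1$ shows $\phi_\pm(x+2\pi) = \phi_\pm(x)$, so $\phi_\pm \in H^2_{\rm per}(0,2\pi)$ and they are linearly independent eigenfunctions of $L(k_0)$ for the eigenvalue $\mu$. By construction $\phi_+(x)e^{\ri k_0 x} = \psi_1(x)$ is real and even, while $\phi_-(x)e^{\ri k_0 x} = \psi_2(x)$ is real and odd, which completes the proof. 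The argument is largely routine; the only subtlety worth flagging is the equality of algebraic and geometric multiplicities that lets us conclude that \emph{every} ODE solution satisfies the Bloch boundary condition at a Dirac point.
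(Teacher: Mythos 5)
Your proof is correct, and it follows the same basic strategy as the paper: pass to the Bloch variable $\psi(x)=\phi(x)e^{\ri k_0 x}$, so that $\psi$ solves the Hill equation $-\psi''+U\psi=\mu\psi$ with the $(\pm 1)$-(anti)periodic boundary condition. Where the paper then simply cites Theorems 1.1 and 1.2 of Magnus and Winkler, you give a self-contained argument: since $L(k_0)$ is self-adjoint at $\gamma=0$, a Dirac point forces geometric multiplicity $2$, so the two-dimensional space of (anti)periodic solutions exhausts the two-dimensional ODE solution space, i.e.\ the monodromy matrix is $\pm I$; consequently the real fundamental system $\psi_1,\psi_2$ determined by even/odd Cauchy data at $x=0$ is automatically (anti)periodic, real, and of the required parities, and scaling back by $e^{-\ri k_0 x}$ gives the claimed $\phi_\pm$. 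This is a correct unpacking of the cited coexistence result. The trade-off is the usual one: the paper's version is a one-line appeal to a standard reference, while yours is longer but self-contained and makes explicit the key structural fact (monodromy $=\pm I$ at a Dirac point) that the citation otherwise hides.
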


\begin{proof}
At either $k_0=0$ or $k_0=1/2$ the functions $\phi_+(x)e^{\ri k_0 x}$ and $\phi_-(x)e^{\ri k_0 x}$ are
two linearly independent solutions of the Hill's equation
\beq\label{E:Hill}
\left\{ \begin{array}{l} -u''(x) + U(x)u(x)= \mu u(x), \\
u(x+2\pi)= \pm u(x) \quad \quad \text{for all } x\in\R,
\end{array} \right.
\eeq
where the plus sign is chosen for $k_0 = 0$ and the minus sign is chosen for $k_0 = 1/2$.
Since $U$ is even due to (\ref{PT-symmetry-final}),
there exists one even and one odd real-valued solution of the boundary-value problem
\eqref{E:Hill}, see Theorems 1.1 and 1.2 in \cite{MW}.
\end{proof}

The following proposition presents the first perturbation result on the unstable
splitting of Dirac points under a $\PT$-symmetric perturbation of a real even potential.

\begin{proposition}
Let the periodic potential $V$ in Assumption \ref{ass:Vsig} be given by (\ref{complex-potential}) and (\ref{PT-symmetry-final}).
Assume that $\mu = \omega(k_0)$ is a Dirac point of  $L(k_0)$ at $\gamma = 0$ for either $k_0 = 0$ or $k_0 = 1/2$
and choose the corresponding linearly independent eigenfunctions $\phi_\pm$ such that
$\phi_+(x)e^{\ri k_0 x}$ is real and even and $\phi_-(x)e^{\ri k_0 x}$ is real and odd. If
$$\langle W \varphi_+, \varphi_- \rangle\neq 0,$$
then, for every $\gamma \neq 0$ sufficiently small,
there exist two eigenvalues $\omega_\pm(k_0)$ of the spectral problem (\ref{E:Bloch-ev})
with ${\rm Im}(\omega_\pm(k_0)) \neq 0$ and $\omega_\pm(k_0)\to \mu$ as $\gamma \to 0$.
\label{prop-1}
\end{proposition}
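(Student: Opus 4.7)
The plan is to treat $L(k_0) = L_0(k_0) + \ri \gamma W$ as a holomorphic family of type (A) in the sense of Kato, where $L_0(k_0) := -(d/dx + \ri k_0)^2 + U$ is the self-adjoint operator at $\gamma=0$ and the bounded multiplication operator $W \in L^\infty_{\rm per}(0,2\pi)$ is the perturbation. Since $\mu$ is a semi-simple eigenvalue of $L_0(k_0)$ of geometric multiplicity two and, by hypothesis, isolated from the rest of the spectrum, the standard finite-dimensional reduction applies: for $|\gamma|$ sufficiently small the total spectral projection $P(\gamma)$ of $L(k_0)$ onto a neighborhood of $\mu$ has rank two and depends analytically on $\gamma$, and the eigenvalues of $L(k_0)$ in this neighborhood are those of the $2 \times 2$ matrix representing $L(k_0)$ in the range of $P(\gamma)$.

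First I would fix the orthonormal basis $\{\phi_+,\phi_-\}$ provided by Lemma \ref{L:even_odd_sol} (note that $\phi_+$ and $\phi_-$ are $L^2(0,2\pi)$-orthogonal since the product $(\phi_+ e^{\ri k_0 x})\overline{(\phi_- e^{\ri k_0 x})}$ is the product of a real even and a real odd function, hence odd). Next I would invoke Kato's reduction (see Chapter II.\S 2 and the perturbation series in Chapter VII.\S 1.3 of \cite{Kato}) to write the two perturbed eigenvalues as
\beq
\omega_\pm(k_0;\gamma) = \mu + \gamma \lambda_\pm + O(\gamma^2),
\eeq
where $\lambda_\pm$ are the eigenvalues of the reduced $2\times 2$ matrix
\beq
N := \ri \begin{pmatrix} \langle W \phi_+,\phi_+\rangle & \langle W \phi_-,\phi_+\rangle \\ \langle W \phi_+,\phi_-\rangle & \langle W \phi_-,\phi_-\rangle \end{pmatrix}.
\eeq
Since $W$ is odd while $|\phi_\pm|^2=|\phi_\pm e^{\ri k_0 x}|^2$ is even, both diagonal entries vanish. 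Writing $\beta := \langle W \phi_-,\phi_+ \rangle$, the self-adjointness of $W$ gives $\langle W\phi_+,\phi_-\rangle = \overline{\beta}$, so that $N = \ri\bigl(\begin{smallmatrix} 0 & \beta \\ \overline{\beta} & 0 \end{smallmatrix}\bigr)$. A direct computation yields the characteristic polynomial $\lambda^2 + |\beta|^2$ and hence $\lambda_\pm = \pm\ri |\beta|$. By assumption $\beta \neq 0$, so $\lambda_\pm$ are distinct and nonzero, which both guarantees analyticity of the two branches $\omega_\pm(k_0;\cdot)$ at $\gamma=0$ (no branch point, since the reduced matrix has simple eigenvalues) and forces
\beq
\Im \omega_\pm(k_0;\gamma) = \pm \gamma |\beta| + O(\gamma^2) \neq 0
\eeq
for every sufficiently small $\gamma\neq 0$, while $\omega_\pm(k_0;\gamma)\to\mu$ as $\gamma\to 0$.

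The only nontrivial point is verifying that the holomorphic-family framework is applicable in our setting: this is routine because $L_0(k_0)$ is self-adjoint with compact resolvent on $L^2(0,2\pi)$, $\ri W$ is a bounded symmetric perturbation (so the family is of type (A) with constant domain $H^2(0,2\pi)$), and the spectral projection $P(\gamma)$ depends analytically on $\gamma$ by the standard Dunford-integral representation around a small circle enclosing $\mu$ and disjoint from $\sigma(L_0(k_0))\setminus\{\mu\}$. The remaining algebra -- the vanishing of the diagonal entries and the explicit form of the eigenvalues of $N$ -- is the content of the $\PT$-symmetry assumption (\ref{PT-symmetry-final}) together with Lemma \ref{L:even_odd_sol}, which is precisely what reduces the problem to a two-line linear-algebra computation. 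The only real obstacle is bookkeeping: ensuring that the Puiseux/Taylor expansion produced by the reduction procedure is genuinely analytic (not fractional) in $\gamma$, which follows from the simplicity of the eigenvalues $\pm\ri|\beta|$ of the reduced matrix.
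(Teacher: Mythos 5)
Your proof is correct, and the decisive algebraic step is the same as the paper's: reduce to the two-dimensional eigenspace of the unperturbed Dirac point, observe that $\PT$-symmetry kills the diagonal of the reduced matrix, and read off the purely imaginary eigenvalues $\pm\ri|\langle W\varphi_+,\varphi_-\rangle|$. The difference is in how the reduction is justified. You invoke Kato's analytic perturbation theory for a holomorphic family of type (A), relying on the analyticity of the rank-two total spectral projection $P(\gamma)$ (Dunford integral) and the fact that distinct first-order eigenvalues of the reduced matrix rule out Puiseux branching. The paper instead carries out the reduction by hand: it introduces the projections $P_0$, $Q_0$ onto $\spann\{\varphi_+,\varphi_-\}$ and its complement, solves the $Q_0$-block for the correction $\phi$ by a contraction argument using the invertibility of $Q_0(L_U(k_0)-\mu)Q_0$, and then closes the $P_0$-block for $(\Omega,{\bf c})$ by an explicit implicit-function-theorem argument near the eigenpairs of the truncated matrix $\ri M_W$. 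Your packaging is more economical and less error-prone for this specific proposition; the paper's hands-on Lyapunov--Schmidt style is slightly more self-contained and, more importantly, serves as a template for Proposition~\ref{prop-3}, where one needs quantitative control of the reduction uniformly as the band index $m\to\infty$ (the perturbation is no longer small, only the resolvent scales like $m^{-1}$), a regime where one cannot simply cite the fixed-$\gamma$ Kato machinery. For the statement as posed, both routes are valid and essentially equivalent in content.
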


\begin{proof}
The assertion follows from the perturbation theory for the Bloch eigenvalue problem
\begin{equation}
\label{perturbation-theory-1}
\left\{ \begin{array}{l} \left[ L_U(k_0) + \ri \gamma W \right] p(\cdot,k_0) = \omega(k_0) p(\cdot,k_0), \\
p(x+2\pi,k_0)=p(x,k_0) \quad \quad \text{for all } x\in\R,
\end{array} \right.
\end{equation}
where $L_U(k_0):=-(\tfrac{d}{dx}+\ri k_0)^2+U$ and $\mu = \omega(k_0)$ is double at $\gamma = 0$,
with two linearly independent eigenfunctions $\varphi_{\pm}$. We normalize $\varphi_{\pm}$
such that $\|\phi_\pm\|_{L^2(0,2\pi)}=1$. The eigenfunctions are orthogonal
$\langle\phi_+,\phi_-\rangle=\langle \phi_+e^{\ri k_0 \cdot},\phi_-e^{\ri k_0 \cdot}\rangle=0$
because $\phi_+e^{\ri k_0 \cdot}$ and $\phi_-e^{\ri k_0 \cdot}$ have opposite (even and odd) symmetries.

Let us use the orthogonal projection operators $P_0$ and $Q_0 = I - P_0$, such that
for every $f \in L^2(0,2\pi)$ we define
$$
P_0 f = \langle \varphi_+, f \rangle \varphi_+ + \langle \varphi_-, f \rangle \varphi_-.
$$
Then, clearly, $\langle \varphi_+, Q_0 f \rangle = \langle \varphi_-, Q_0 f \rangle = 0$.
Therefore, we write
\begin{equation}
\label{expansion-1}
\left\{ \begin{array}{l} p(\cdot,k_0) = c_+ \varphi_+ + c_- \varphi_- + \gamma \phi, \quad \langle \varphi_+, \phi \rangle = \langle \varphi_-, \phi \rangle = 0,\\
\omega(k_0) = \mu + \gamma \Omega, \end{array} \right.
\end{equation}
where $c_+,c_- \in \mathbb{C}$ are coordinates of the decomposition over the eigenfunctions
$\varphi_+$, $\varphi_-$ and $\phi$, $\Omega$ are the remainder terms (which depend on $\gamma$).
By using projection operators $P_0$ and $Q_0$,
we project the eigenvalue problem (\ref{perturbation-theory-1}) into the two blocks
\begin{equation}
\label{system-1}
\ri M_W {\bf c} + \ri \gamma \left[ \begin{array}{c} \langle \varphi_+, W \phi \rangle \\ \langle \varphi_-, W \phi \rangle \end{array} \right]
= \Omega {\bf c}
\end{equation}
and
\begin{equation}
\label{system-2}
\left[ L_U(k_0) - \mu \right] \phi = H_W := Q_0 (\Omega - \ri W) \left[ c_+ \varphi_+ + c_- \varphi_- + \gamma \phi \right],
\end{equation}
where ${\bf c} := (c_+,c_-)^T$ and
\begin{equation}
\label{matrix-M-W}
M_W := \left[ \begin{array}{cc} \langle W \varphi_+, \varphi_+ \rangle  & \langle W \varphi_-, \varphi_+ \rangle \\
\langle W \varphi_+, \varphi_- \rangle & \langle W \varphi_-, \varphi_- \rangle  \end{array} \right].
\end{equation}
Because  $W$ is odd and $|\phi_\pm(x)|^2=\phi_\pm(x)e^{\ri k_0 x}\overline{\phi_\pm(x)e^{\ri k_0 x}}$
are even, we get $\langle W \phi_\pm, \phi_\pm\rangle =0$. Hence,
$$
M_W = \left[ \begin{array}{cc} 0 & \langle W \varphi_-, \varphi_+ \rangle \\
\langle W \varphi_+, \varphi_- \rangle & 0 \end{array} \right].
$$
Since $M_W$ is hermitian, the two eigenvalues  $\Omega$
of the truncated eigenvalue problem
\begin{equation}
\label{truncated-matrix}
\ri M_W {\bf c} = \Omega {\bf c}
\end{equation}
are purely imaginary, $\Omega_{1,2}=\ri \lambda_{1,2}:= \pm \ri |\langle W \varphi_+, \varphi_- \rangle|$. They are nonzero and
distinct if $\langle W \varphi_+, \varphi_- \rangle \neq 0$.
The eigenvectors ${\bf c}$ for the two distinct eigenvalues are linearly independent.

Since the double eigenvalue $\mu$ is isolated from the rest of the spectrum of $L_U(k_0)$
in $L^2(0,2\pi)$,  a positive constant $C_0$ exists such that
$$
\| Q_0 (L_U(k_0) - \mu)^{-1} Q_0 \|_{L^2(0,2\pi) \to L^2(0,2\pi)} \leq C_0.
$$
Let us assume that ${\bf c}$ and $\Omega$ are bounded by a $\gamma$-independent positive constant
in the limit $\gamma \to 0$. Since $H_W \in {\rm Ran}(L_U(k_0) - \mu)$, fixed-point iterations can be applied to
system (\ref{system-2}) for any finite ${\bf c}$, finite $\Omega$, and sufficiently small  $\gamma > 0$.
There exists a unique solution $\phi =\phi(\gamma,\Omega,{\bf c})\in L^2(0,2\pi)$ to system (\ref{system-2})
satisfying the bound
$$
\| \phi \|_{L^2(0,2\pi)} \leq C (\| {\bf c} \| + |\Omega|),
$$
for $\gamma > 0$ sufficiently small and a $\gamma$-independent constant $C > 0$.

We substitute now  $\phi =\phi(\gamma,\Omega,{\bf c})$ into \eqref{system-1} and close the construction via an implicit function argument. Let us define
$$
G(\gamma,\Omega,{\bf c}):=\ri M_W {\bf c} +
\ri \gamma \left[ \begin{array}{c} \langle \varphi_+, W \phi \rangle \\ \langle \varphi_-, W \phi \rangle \end{array} \right]- \Omega {\bf c}.
$$
We have $G(0,\Omega_1,{\bf c}_1) = G(0,\Omega_2, {\bf c}_2) = 0$,
where $\Omega_{1,2} = \ri \lambda_{1,2}$ are the two eigenvalues of the
truncated eigenvalue problem (\ref{truncated-matrix}) with the
eigenvectors ${\bf c}_{1,2} \in \mathbb{C}^2$.
The Jacobian with respect to $\Omega$ and ${\bf c}$ is given by
$$
J_j(\tilde{\Omega},\tilde{\bf c}):=(D_{(\Omega,{\bf c})}G)|_{(0,\ri \lambda_j,{\bf c}_j)}(\tilde{\Omega},\tilde{\bf c})=\ri (M_W-\lambda_j)\tilde{\bf c}-\tilde{\Omega} {\bf c}_j, \ j=1,2.
$$

For every ${\bf b} \in \C^2$, there is a unique $\tilde{\Omega} \in \mathbb{C}$ and
$\tilde{\bf c} \in C_j^{\perp} := \{ {\bf c} \in \C^2: \;\; {\bf c} \perp {\bf c}_j\}$
such that $J_j(\tilde{\Omega},\tilde{\bf c}) = {\bf b}$. Indeed,
each ${\bf b}\in \C^2$ can be uniquely decomposed into $C_j$ and $C_j^{\perp}$
via ${\bf b} = b_j {\bf c}_j + {\bf b}^\perp_j$ for some $b_j$ and ${\bf b}^\perp_j \perp {\bf c}_j$.
Then, $\tilde{\Omega}_j = - b_j$ and $\tilde{\bf c} \in C_j^{\perp}$ is the unique solution
of the linear inhomogeneous equation $\ri (M_W-\lambda_j)\tilde{\bf c}={\bf b}^\perp_j$.

Hence, the implicit function theorem produces two unique roots for ${\bf c} \neq {\bf 0}$ and $\Omega$ in system (\ref{system-1}) which converge as $\gamma \to 0$ respectively to the eigenpairs $(\ri \lambda_1,{\bf c}_1)$ and $(\ri \lambda_2,{\bf c}_2)$  of the
truncated problem (\ref{truncated-matrix}).
\end{proof}

\begin{remark}
For a general choice of the orthogonal and normalized eigenfunctions $\varphi_+$ and $\varphi_-$,
the matrix $M_W$ in (\ref{matrix-M-W}) is no longer anti-diagonal. However, eigenvalues of $M_W$
are invariant with respect to rotation of the basis in $\mathbb{C}^2$ and therefore the two
eigenvalues are still distinct. The proof of Proposition \ref{prop-1} can be applied for a general
choice of eigenfunctions $\varphi_+$ and $\varphi_-$ and the sufficient condition
for splitting of the Dirac points is given by invertibility of the matrix $M_W$.
\end{remark}

If $U = 0$, there are infinitely many Dirac points in the Bloch eigenvalue problem (\ref{perturbation-theory-1})
for $\gamma = 0$. The following proposition gives a sufficient condition
that one of these Dirac points splits and gives rise to instability under the $\PT$-symmetric potential $W$.

\begin{proposition}
Let $U = 0$. At $\gamma=0$ Dirac points exist
at each $\mu_n=\tfrac{1}{4}n^2, n \in \N$. The $k$-coordinate of the Dirac point
at $\mu_n$ is $k_0=0$ for $n$ even and $k_0=\tfrac{1}{2}$ for $n$ odd. Let $W \in L^{\infty}_{\rm per}(0,2\pi)$
be defined by the Fourier sine series
\begin{equation}
\label{Fourier-series}
W(x) = \sum_{j \in \mathbb{N}} b_j \sin(jx)
\end{equation}
with $b_j\in \R$ for every $j \in \mathbb{N}$. If $b_n\neq 0$ for some $n\in \N$, then for every $\gamma\neq 0$
sufficiently small the Dirac point at $\mu_n = \tfrac{1}{4} n^2$ breaks into two complex eigenvalues
$\omega_\pm(k_0)$ of the spectral problem (\ref{E:Bloch-ev})
with ${\rm Im}(\omega_\pm(k_0)) \neq 0$ and $\omega_\pm(k_0)\to \mu_n$ as $\gamma \to 0$.
\label{prop-2}
\end{proposition}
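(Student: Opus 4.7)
The plan is to reduce Proposition \ref{prop-2} directly to Proposition \ref{prop-1}, by producing explicit eigenfunctions of $L_0(k_0):=-(d/dx+\ri k_0)^2$ at each candidate Dirac point and computing the coupling $\langle W\varphi_+,\varphi_-\rangle$ in closed form. Once that coupling is shown to equal (a nonzero multiple of) $b_n$, the hypothesis of Proposition \ref{prop-1} is met and its conclusion is precisely what we want.

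First I would classify the Dirac points of $L_0(k)$ on $\B=(-1/2,1/2]$. The eigenpairs of $L_0(k)$ are $\{((j+k)^2,\,e^{\ri jx}/\sqrt{2\pi})\}_{j\in\Z}$, and two eigenvalues collide at $k=k_0$ iff $(j_1+k_0)^2=(j_2+k_0)^2$ with $j_1\neq j_2$, forcing $k_0=-(j_1+j_2)/2\in\{0,\tfrac12\}\cap\B$. For $k_0=0$ this gives the pairs $\{m,-m\}$ with $m\geq 1$ and eigenvalue $m^2$; for $k_0=\tfrac12$ it gives $\{m,-m-1\}$ with $m\geq 0$ and eigenvalue $(m+\tfrac12)^2$. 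Writing $n=2m$ or $n=2m+1$ collects all these eigenvalues as $\mu_n=n^2/4$, $n\in\N$, with $k_0=0$ for $n$ even and $k_0=\tfrac12$ for $n$ odd, as claimed.

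Next, at the Dirac point $\mu_n$ I would fix a basis of $\Ker(L_0(k_0)-\mu_n)$ adapted to Lemma \ref{L:even_odd_sol}. For $n=2m$ with $k_0=0$ take
$$
\varphi_+(x)=\pi^{-1/2}\cos(mx),\qquad \varphi_-(x)=\pi^{-1/2}\sin(mx),
$$
while for $n=2m+1$ with $k_0=\tfrac12$ take
$$
\varphi_+(x)=\pi^{-1/2}\cos\!\big((m+\tfrac12)x\big)\,e^{-\ri x/2},\qquad \varphi_-(x)=\pi^{-1/2}\sin\!\big((m+\tfrac12)x\big)\,e^{-\ri x/2}.
$$
A direct verification yields $\|\varphi_\pm\|_{L^2(0,2\pi)}=1$, $\langle\varphi_+,\varphi_-\rangle=0$, and $\varphi_\pm(x)e^{\ri k_0 x}$ real with the required even (resp.\ odd) parity.

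Finally I would compute the coupling coefficient. In both cases the factor $e^{-\ri x/2}$ in $\varphi_+$ cancels the corresponding factor in $\overline{\varphi_-}$, and the product-to-sum identity yields
$$
\varphi_+(x)\,\overline{\varphi_-(x)}=\frac{1}{2\pi}\sin(nx).
$$
Substituting the sine series \eqref{Fourier-series} and using the orthogonality $\int_0^{2\pi}\sin(jx)\sin(nx)\,dx=\pi\,\delta_{jn}$ gives
$$
\langle W\varphi_+,\varphi_-\rangle=\frac{1}{2\pi}\int_0^{2\pi}W(x)\sin(nx)\,dx=\frac{b_n}{2}.
$$
Hence $\langle W\varphi_+,\varphi_-\rangle\neq 0$ whenever $b_n\neq 0$, and Proposition \ref{prop-1} then produces the two complex eigenvalues $\omega_\pm(k_0)\to\mu_n$ with $\Im(\omega_\pm(k_0))\neq 0$ for every sufficiently small $\gamma\neq 0$. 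No substantive obstacle is expected; the only care is to track the twist factor $e^{-\ri x/2}$ at $k_0=\tfrac12$ when verifying normalizations and parities.
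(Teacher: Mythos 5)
Your proof is correct and follows essentially the same route as the paper: classify the Dirac points of $L_0(k)$, choose the even/odd eigenbasis at each one, compute $\langle W\varphi_+,\varphi_-\rangle=\tfrac12 b_n$ via the product-to-sum identity, and invoke Proposition \ref{prop-1}. The only (cosmetic) difference is that for $n$ odd you write the eigenfunctions in the uniform trigonometric form $\pi^{-1/2}\cos\bigl((m+\tfrac12)x\bigr)e^{-\ri x/2}$, $\pi^{-1/2}\sin\bigl((m+\tfrac12)x\bigr)e^{-\ri x/2}$, whereas the paper expresses them as linear combinations of complex exponentials; the two bases coincide after a shift of index.
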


\begin{proof}
For $U = 0$ and $\gamma = 0$, the eigenvalues of the Bloch eigenvalue problem \eqref{E:Bloch-ev} are
$$
\tilde{\omega}_{2m-1}(k)=(k-m)^2, \quad \tilde{\omega}_{2m}(k)=(k+m)^2, \quad m\in \N, \quad k\in \B,
$$
which give the location of the Dirac points at $(k_0,\mu)=(0,m^2)$, i.e. the crossing point of
$\tilde{\omega}_{2m-1}(k)$ and $\tilde{\omega}_{2m}(k)$, and at $(k_0,\mu)=(\tfrac{1}{2},\tfrac{(2m-1)^2}{4})$,
i.e. the crossing point of $\tilde{\omega}_{2m-2}(k)$ and $\tilde{\omega}_{2m-1}(k)$. Note that in contrast to $\omega_n$ the eigenvalues $\tilde{\omega}_n$ are not ordered according to the magnitude (of the real part) but rather according to the Fourier series index. We enumerate the Dirac points by $\mu_n := \tfrac{1}{4}n^2$ for $n \in \N$.

If $n = 2m-1$ with $m \in \mathbb{N}$, the two linearly independent normalized eigenfunctions
of the Bloch eigenvalue problem (\ref{E:Bloch-ev}) with the symmetry properties
as in Lemma \ref{L:even_odd_sol} are given by
\begin{equation}
\label{case-1}
\varphi_+(x) = \frac{1}{2 \sqrt{\pi}} \left(e^{\ri (m-1) x}+e^{-\ri mx}\right), \quad
\varphi_-(x) = \frac{1}{2 \ri \sqrt{\pi}} \left(e^{\ri (m-1) x}-e^{-\ri mx}\right).
\end{equation}
If $n = 2m$ with $m \in \mathbb{N}$, the two eigenfunctions are
\begin{equation}
\label{case-2}
\varphi_+(x) = \frac{1}{\sqrt{\pi}} \cos(mx), \quad \varphi_-(x) = \frac{1}{\sqrt{\pi}}  \sin(mx).
\end{equation}
In both (\ref{case-1}) and (\ref{case-2}) we have
$$
\langle W \varphi_+, \varphi_- \rangle = \frac{1}{2} b_n,
$$
where $b_n$ is the Fourier coefficient
in (\ref{Fourier-series}) for either $n = 2m-1$ or $n = 2m$.
If $b_n \neq 0$ for some $n \in \mathbb{N}$, the two eigenvalues $\omega_\pm(k_0)$ are complex
by Proposition \ref{prop-1}.
\end{proof}

Figure \ref{Fig:bd_str} illustrates Proposition \ref{prop-2} with the example $W(x)=\sin(2x)$
for $\gamma = 0$ (a) and $\gamma = 0.2$ (b,c).
The eigenvalue families in (b,c) were computed numerically using a finite difference discretization.

\begin{remark}
If $b_j\neq 0$ for all $j\in \N$ in the Fourier series \eqref{Fourier-series},
then all Dirac points split into two complex eigenvalues and none of the spectral bands is completely real for $\gamma$ small.
\end{remark}

\begin{figure}[ht!]
\includegraphics[scale=0.5]{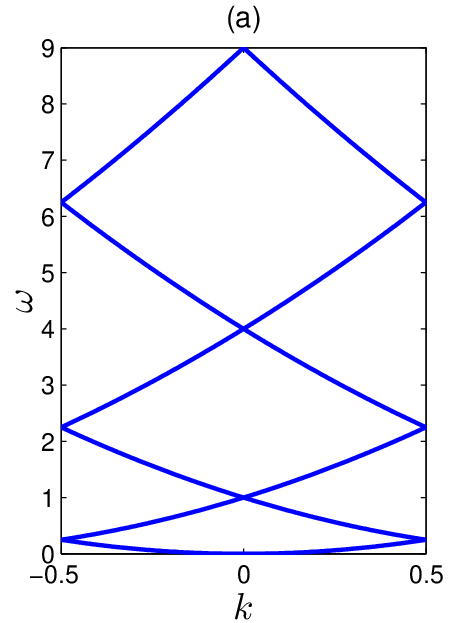}
\includegraphics[scale=0.5]{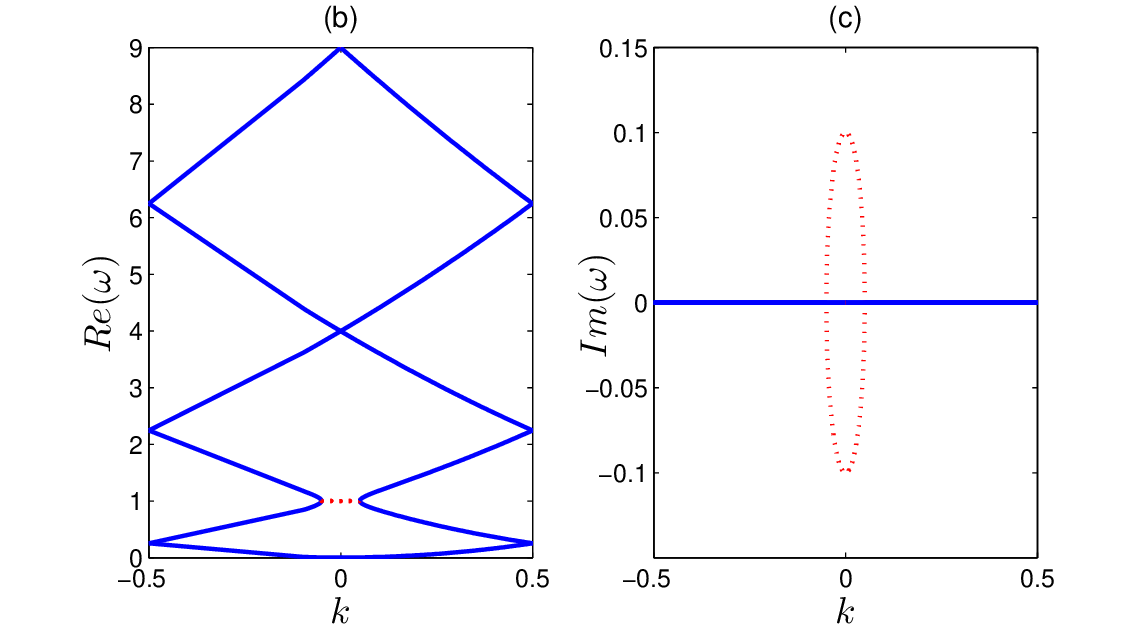}
\caption{Eigenvalues $\omega_n(k)$, $n=1,\dots,6$ of the Bloch eigenvalue problem \eqref{E:Bloch-ev}
with $V\equiv 0$ in (a) and $V(x)=0.2\ri \sin(2x)$ in (b) and (c) computed numerically. The Dirac point at the intersection of $\tilde{\omega}_2$ and $\tilde{\omega}_3$ with $V\equiv 0$ splits into a complex conjugate pair when $V(x)=0.2\ri \sin(2x)$. Purely real eigenvalues are plotted with the full blue line, complex eigenvalues are in dotted red.}\label{Fig:bd_str}
\end{figure}

The final result shows that if $U$ is smoother than $W$ and $W$ is not too smooth, then
the high-energy bands split generally and become unstable for every nonzero $\gamma$.
This means that the $\PT$-symmetry breaking threshold discussed in many
publications (see, e.g., the review in \cite{Konotop}) is identically zero
even if the real potential $U$ is generic and has no Dirac points.
To simplify the proof of the following proposition, we assume that $U$ has zero mean.

\begin{proposition}
Let $U$ and $W$ be defined by the Fourier series
\begin{equation}
\label{Fourier-series-2}
U(x) = 2 \sum_{j \in \mathbb{N}} a_j \cos(jx), \quad
W(x) = 2 \sum_{j \in \mathbb{N}} b_j \sin(jx),
\end{equation}
where $\{a_j\}_{j \in \mathbb{N}}, \{b_j\}_{j \in \mathbb{N}} \in \ell^1(\mathbb{N},\mathbb{R})$ satisfy
\begin{equation}
\label{assumption-a-b}
\lim\limits_{j\to\infty} \frac{|a_j|}{|b_j|} = 0, \quad
\lim\limits_{j \to \infty} \frac{1}{j^2 |b_j|} = 0, \quad
\lim\limits_{j \to \infty} \frac{\sum_{k = j+1}^{\infty} |b_k|^2}{j^2 |b_j|^2} = 0.
\end{equation}
Then for every $\gamma \neq 0$ there is a sufficiently large $N \in \mathbb{N}$
such that for every $n \geq N$ two complex eigenvalues $\omega_{\pm (n)}(k_0)$
of the spectral problem (\ref{E:Bloch-ev}) with  $k_0\in \{0, 1/2\}$ exist, which satisfy
\begin{itemize}
\item[(i)] ${\rm Im}(\omega_{\pm (n)}(k_0)) \neq 0$,
\item[(ii)] $|\omega_{\pm (n)}(k_0) - n^2/4| \to 0$ as $n \to \infty$.
\end{itemize}
\label{prop-3}
\end{proposition}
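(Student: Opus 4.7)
The plan is to imitate the Lyapunov--Schmidt reduction of the proof of Proposition \ref{prop-1}, but now at each free Dirac point $\mu_n = n^2/4$ of $L_0(k_0) := -(d/dx + \ri k_0)^2$ with $k_0 = 0$ for $n$ even and $k_0 = 1/2$ for $n$ odd, treating the entire complex perturbation $V = U + \ri\gamma W$ as the ``small'' potential and sending $n \to \infty$ rather than $\gamma \to 0$. Let $P_0$ be the orthogonal projection onto $\mathrm{span}\{\varphi_+,\varphi_-\}$, where $\varphi_\pm$ are the explicit free eigenfunctions of $L_0(k_0)$ at $\mu_n$ listed in (\ref{case-1})--(\ref{case-2}), and let $Q_0 = I - P_0$. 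Decomposing $p = c_+ \varphi_+ + c_- \varphi_- + \phi$ with $\phi \in Q_0 L^2(0,2\pi)$ and $\omega(k_0) = \mu_n + \Omega$, the eigenvalue problem $(L_0(k_0) + U + \ri \gamma W)p = \omega(k_0) p$ splits into a $2\times 2$ bifurcation equation for ${\bf c} = (c_+,c_-)^\top$ and a Fredholm equation for $\phi$, exactly as in (\ref{system-1})--(\ref{system-2}).

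A direct Fourier computation using the explicit form of $\varphi_\pm$ together with the series (\ref{Fourier-series-2}) gives the leading reduced matrix
\[
M_0 = \begin{pmatrix} a_n & \ri\gamma b_n \\ \ri\gamma b_n & -a_n \end{pmatrix},
\]
in which the real off-diagonal entries vanish because $U$ is even and the imaginary diagonal entries vanish because $W$ is odd. The eigenvalues of $\mu_n I + M_0$ are therefore
\[
\mu_n \pm \sqrt{a_n^2 - \gamma^2 b_n^2},
\]
a strictly complex-conjugate pair precisely when $|\gamma b_n| > |a_n|$. The first hypothesis in (\ref{assumption-a-b}), namely $|a_j|/|b_j| \to 0$, immediately gives this strict inequality for any fixed $\gamma \neq 0$ and all sufficiently large $n$, with imaginary splitting of size $\sqrt{\gamma^2 b_n^2 - a_n^2} \sim |\gamma b_n|$.

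The hard step is the estimate of the correction $R_n(\Omega,\gamma)$ to $M_0$ produced by eliminating $\phi$: the reduced problem reads $(M_0 + R_n(\Omega,\gamma)){\bf c} = \Omega {\bf c}$, and one must show $\|R_n\|$ is strictly smaller than the imaginary splitting $\sqrt{\gamma^2 b_n^2 - a_n^2}$, not merely small in absolute terms. The starting point is the spectral-gap bound
\[
\|Q_0 (L_0(k_0) - \mu_n)^{-1} Q_0\|_{L^2(0,2\pi) \to L^2(0,2\pi)} \leq c/n,
\]
since the two nearest free eigenvalues of $L_0(k_0)$ to $\mu_n$ lie at distance of order $n$. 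Solving the $\phi$-equation by contraction and substituting back yields a Born-type expansion for $R_n$ whose individual virtual-transition contributions involve resolvent factors $1/(j(n+j))$ against coefficients $|a_j| + |\gamma||b_j|$. Here I expect the second hypothesis $1/(j^2|b_j|) \to 0$ to control the diagonal shift in $M_0$ coming from low-frequency virtual excitations (which scale like $1/n^2$ times a bounded coupling and must be compared to $|\gamma b_n|$), while the third hypothesis $\sum_{k>j}|b_k|^2/(j^2|b_j|^2)\to 0$ is exactly what one obtains by Cauchy--Schwarz against the resolvent tail $\sum_{k>n}|b_k|^2/k^2$, again divided by $|\gamma b_n|^2$.

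Once $\|R_n\|/|\gamma b_n| \to 0$ is established, the conclusion is the same implicit-function-theorem argument used at the end of the proof of Proposition \ref{prop-1}: the two simple eigenvalues $\pm \ri \sqrt{\gamma^2 b_n^2 - a_n^2}$ of $M_0$ persist and perturb to two eigenvalues $\Omega_{\pm(n)}(\gamma)$ of $M_0 + R_n(\Omega,\gamma)$ with ${\rm Im}\,\Omega_{\pm(n)}(\gamma) \neq 0$ and $|\Omega_{\pm(n)}(\gamma)| \leq C|\gamma b_n| \to 0$ as $n \to \infty$, which delivers both claims (i) and (ii). The main obstacle throughout is thus not the $2\times 2$ algebra but the sharp comparison of $R_n$ against the \emph{difference} $\sqrt{\gamma^2 b_n^2 - a_n^2}$, which is smaller than either $|\gamma b_n|$ or $|a_n|$ alone and is the reason all three conditions in (\ref{assumption-a-b}) are needed.
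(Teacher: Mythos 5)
Your plan follows essentially the same route as the paper: a Lyapunov--Schmidt reduction at each free Dirac point $\mu_n = n^2/4$, with the full potential $U + \ri\gamma W$ treated as the small perturbation and $n\to\infty$ as the asymptotic parameter. You correctly identify the leading $2\times 2$ matrix (written in the even/odd basis rather than the paper's Fourier-mode basis, but with the same spectrum $\pm\sqrt{a_n^2 - \gamma^2 b_n^2}$), the resolvent bound $\|Q_0(L_0(k_0)-\mu_n)^{-1}Q_0\|\lesssim 1/n$ coming from the spectral gap of order $n$, and the need to show that the reduction error is $o(|\gamma b_n|)$ rather than merely small in absolute terms. The heuristic attribution of the three hypotheses in \eqref{assumption-a-b} to the various correction terms is also broadly consistent with the paper's estimates.

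That said, the proposal is a plan rather than a proof, and the part you flag as ``the hard step'' is not actually carried out. The paper's argument contains genuine work there that is being compressed into a sentence: an iteration to obtain the explicit representation of the eliminated Fourier modes and its leading term \eqref{P-leading-order}, separate $\gamma$-even/odd splittings $E_m^\pm$, $F_m^\pm$ exploiting the symmetry \eqref{P-Q-symmetry} to reduce to the scalar characteristic equation \eqref{roots-Omega}, a Cauchy--Schwarz estimate for the off-diagonal correction producing the tail factor $(\sum_{j\ge m+1}|b_j|^2)^{1/2}/m$, and finally an a priori bound $|\Omega|\lesssim m^{-1}$ that controls $\mathrm{Im}(E_m^+)\lesssim m^{-3}$ before the final comparison with $|\gamma b_m|$. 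None of these steps is immediate, and each of the second and third hypotheses in \eqref{assumption-a-b} enters at specific places; your description (second hypothesis for a ``diagonal shift'', third for a ``resolvent tail'') is close but loose — the $m^{-2}$ bound controlled by the second hypothesis appears in off-diagonal as well as diagonal corrections, and also in the bound on $\mathrm{Im}(E_m^+)$.

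There is also a small but genuine error in your final sentence: you assert that the splitting $\sqrt{\gamma^2 b_n^2 - a_n^2}$ is ``smaller than either $|\gamma b_n|$ or $|a_n|$ alone.'' It is smaller than $|\gamma b_n|$, but under the hypothesis $|a_n|/|b_n|\to 0$ it is asymptotically \emph{equal} to $|\gamma b_n|$ and hence \emph{much larger} than $|a_n|$, as you in fact correctly state two paragraphs earlier. The reason all three conditions are needed is not that the splitting is unusually small, but that the various correction terms scale like $m^{-2}$, $m^{-1}(\sum_{j>m}|b_j|^2)^{1/2}$, and $|a_n|$ respectively, and each of these must separately be shown to be $o(|b_n|)$.
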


\begin{proof}
By the asymptotic theory in \cite[Chapter 4]{Eastham} for $\gamma = 0$,
the band edge points converge
at infinity to the Dirac points of the homogenous problem (\ref{E:Bloch-ev}) with $V = 0$.
Therefore, in order to prove the assertion, we will
treat $U$ and $W$ as perturbation terms in the Bloch eigenvalue problem
\begin{equation}
\label{perturbation-theory-2}
\left\{ \begin{array}{l} \left[ L_0(k_0) + U + \ri \gamma W \right] p(\cdot,k_0) = \omega(k_0) p(\cdot,k_0), \\
p(x+2\pi,k_0)=p(x,k_0) \quad \quad \text{for all } x\in\R,
\end{array} \right.
\end{equation}
where $L_0(k_0):=-(\tfrac{d}{dx}+\ri k_0)^2$. The two eigenfunctions
of $L_0(k_0)$ are given by either (\ref{case-1}) or (\ref{case-2}) for the double
eigenvalue $\omega(k_0) = \frac{1}{4} n^2$ with either $n = 2m-1$ and $k_0 = 1/2$
or $n = 2m$ and $k_0 = 0$.

We present here the even case $n = 2m$, $m \in \N$. The odd case is analogous.
We represent
$$\omega(k_0) = m^2 + \Omega,$$
where $\Omega$ is shown to be small as $m \to \infty$. Let us write $p(\cdot,k_0)$ in the Fourier series form
$$
p(x,k_0) = \sum_{j \in \mathbb{Z}} c_j e^{i j x}.
$$
Substituting these representations in the Bloch eigenvalue problem (\ref{perturbation-theory-2}),
we obtain the discrete eigenvalue problem
\begin{equation}
\label{perturbation-theory-3}
(j^2 - m^2 - \Omega) c_j + \sum_{k \in \mathbb{Z}} (a_k + \gamma b_k) c_{j-k} = 0,
\end{equation}
where $a_{-k} = a_k$ and $b_{-k} = -b_k$ for $k \in \mathbb{N}$ and $a_0 = b_0 = 0$.
Singling out the resonant terms at $j = \pm m$, we project the eigenvalue problem (\ref{perturbation-theory-3})
into two blocks
\begin{equation}
\label{system-1a}
\left[ \begin{array}{cc} 0 & a_m + \gamma b_m \\
a_m - \gamma b_m & 0 \end{array} \right] {\bf C}_m +
\left[ \begin{array}{c} \sum_{j \in \mathbb{Z} \backslash \{0,2m\}} (a_j + \gamma b_j) c_{m-j} \\
\sum_{j \in \mathbb{Z} \backslash \{0,-2m\}} (a_j + \gamma b_j) c_{-m-j} \end{array} \right]
= \Omega {\bf C}_m
\end{equation}
and
\begin{equation}
\label{system-2a}
c_j = \frac{1}{m^2 + \Omega - j^2} \sum_{k \in \mathbb{Z}} (a_k + \gamma b_k) c_{j-k}, \quad j \in \mathbb{Z} \backslash \{ m, -m\},
\end{equation}
where ${\bf C}_m := (c_m,c_{-m})^T$. The two eigenvalues of the matrix in the first term of
the left-hand side of (\ref{system-1a}) are given by
\begin{equation}
\label{eig-truncated}
\Omega^{(0)}_{\pm} := \pm \sqrt{a_m^2 - \gamma^2 b_m^2}.
\end{equation}
Since $\lim\limits_{m\to\infty}|a_m|/|b_m| = 0$ by the first assumption in (\ref{assumption-a-b}),
for any $\gamma \neq 0$ there exists a sufficiently large $N$ such that
$a_m^2 - \gamma^2 b_m^2 < 0$ for any $m \geq N$. The corresponding
eigenvalues $\Omega^{(0)}_{\pm}$ are distinct and complex.
In what follows, we prove persistence of this complex splitting
of the double zero eigenvalue $\Omega$ in the block (\ref{system-1a}).

We assume that ${\bf C}_m$ and $\Omega$ are bounded by an $m$-independent positive constant
in the limit $m \to \infty$. We denote ${\bf c}_m := \{ c_j \}_{j \in \mathbb{Z}_m}$
with $\mathbb{Z}_m := \mathbb{Z} \backslash \{ m,-m\}$ and work in the sequence space
$\ell^2(\mathbb{Z}_m)$, which represents the space $L^2(0,2\pi)$ for
the original problem (\ref{perturbation-theory-2}).

Since the spacing between $m^2$ and $(m\pm 1)^2$ grows
like $m$ as $m \to \infty$, we set
$$
K_j := \frac{1}{m^2 + \Omega - j^2}\sum_{k\in\mathbb{Z}\backslash\{j-m,j+m\}} (a_k+\gamma b_k)c_{j-k}, \quad j \in \mathbb{Z}_m
$$
and obtain
\begin{equation}
\label{inverse-operator}
\left\| K \right\|_{\ell^2(\mathbb{Z}_m)}
\leq C m^{-1} \left(\| a \|_{\ell^1} + |\gamma| \| b \|_{\ell^1}\right) \| {\bf c}_m \|_{\ell^2(\mathbb{Z}_m)},
\end{equation}
by using  Young's inequality for convolutions
\begin{equation}
\label{Young}
\left\| x * y \right\|_{\ell^r}\leq \|x\|_{\ell^p}\|y\|_{\ell^q}, \quad r,p,q \geq 1, \quad  1+\frac{1}{r} = \frac{1}{p} + \frac{1}{q},
\end{equation}
with $r=2$, $p=1$, and $q=2$. The positive constant $C$ is $m$-independent but may depend
on $\Omega$ and $\gamma$. In what follows,
we use the same notation for the generic constant $C$ that may change from one line to another line.

Thanks to the bound (\ref{inverse-operator}), the inverse operator can be constructed for
system (\ref{system-2a}) in $\ell^2(\mathbb{Z}_m)$ for any finite ${\bf C}_m$ and $\Omega$ if $m$ is sufficiently large.
By the inverse function theorem, there exists a unique solution
${\bf c}_m \in \ell^2(\mathbb{Z}_m)$ to system (\ref{system-2a}), which can be represented in the form
\begin{equation}
\label{c-representation}
{\bf c}_m = c_{m} {\bf P}_m(\Omega,\gamma) + c_{-m} {\bf Q}_m(\Omega,\gamma),
\end{equation}
where the unique vectors ${\bf P}_m, {\bf Q}_m \in \ell^2(\mathbb{Z}_m)$ depend on $m$, $\Omega$, and $\gamma$
and satisfy the bounds
\begin{equation}
\label{bounds-P-Q}
\| {\bf P}_m(\Omega,\gamma) \|_{\ell^2(\mathbb{Z}_m)} + \| {\bf Q}_m(\Omega,\gamma) \|_{\ell^2(\mathbb{Z}_m)} \leq
C m^{-1}
\end{equation}
for an $m$-independent positive constant $C$.

By the symmetry of system (\ref{system-2a}), we note that
\begin{equation}
\label{P-Q-symmetry}
\left[ {\bf Q}_m(\Omega,\gamma) \right]_{-j} = \left[ {\bf P}_m(\Omega,-\gamma) \right]_{j}, \quad j \in \mathbb{Z}_m.
\end{equation}
Moreover, solving system (\ref{system-2a}) by iterations, we can write
\begin{equation}
\label{P-leading-order}
\left[ {\bf P}_m(\Omega,\gamma) \right]_{j} = \frac{a_{j-m} + \gamma b_{j-m}}{m^2 + \Omega - j^2} +
\left[ \tilde{\bf P}_m(\Omega,\gamma) \right]_{j}, \quad
j \in \mathbb{Z}_m.
\end{equation}
where $\tilde{\bf P}_m$ satisfies the system
$$
\left[ \tilde{\bf P}_m(\Omega,\gamma) \right]_{j} = (m^2 + \Omega - j^2)^{-1}\sum_{k\in \mathbb{Z}_m \backslash \{j-m,j+m\}}(a_k+\gamma b_k)
\left[{\bf P}_m(\Omega,\gamma)\right]_{j-k}, \quad j \in \mathbb{Z}_m.
$$
Thanks to Young's inequality (\ref{Young}), the higher order terms satisfy the bound
\begin{equation}
\label{bounds-higher-order-terms}
\| \tilde{\bf P} \|_{\ell^2(\mathbb{Z}_m)} \leq  Cm^{-1} \|{\bf P}_m \|_{\ell^2(\mathbb{Z}_m)}\leq C m^{-2},
\end{equation}
for another $m$-independent positive constant $C$.

Substituting (\ref{c-representation}) to (\ref{system-1a}), we obtain the matrix nonlinear eigenvalue problem in the form
\begin{equation}
\label{system-3}
\left[ \begin{array}{cc} E_m(\Omega,\gamma) &
a_m + \gamma b_m + F_m(\Omega,\gamma) \\
a_m - \gamma b_m + G_m(\Omega,\gamma)  & H_m(\Omega,\gamma) \end{array} \right] {\bf C}_m
= \Omega {\bf C}_m,
\end{equation}
where
\begin{eqnarray*}
E_m(\Omega,\gamma) & := & \sum_{j \in \mathbb{Z} \backslash \{0,2m\}} (a_j + \gamma b_j) [ {\bf P}_m(\Omega,\gamma)]_{m-j}, \\
F_m(\Omega,\gamma) & := & \sum_{j \in \mathbb{Z} \backslash \{0,2m\}} (a_j + \gamma b_j) [ {\bf Q}_m(\Omega,\gamma)]_{m-j}, \\
G_m(\Omega,\gamma) & := & \sum_{j \in \mathbb{Z} \backslash \{0,-2m\}} (a_j + \gamma b_j) [ {\bf P}_m(\Omega,\gamma)]_{-m-j}, \\
H_m(\Omega,\gamma) & := & \sum_{j \in \mathbb{Z} \backslash \{0,-2m\}} (a_j + \gamma b_j) [ {\bf Q}_m(\Omega,\gamma)]_{-m-j}.
\end{eqnarray*}
By the symmetry in (\ref{P-Q-symmetry}), we obtain
$$
E_m(\Omega,\gamma) = H_m(\Omega,-\gamma), \quad F_m(\Omega,\gamma) = G_m(\Omega,-\gamma).
$$
Eigenvalues $\Omega$ are found as roots of the characteristic equation for (\ref{system-3}),
namely
\begin{equation}
\label{roots-Omega}
\left[ \Omega - E_m^+(\Omega,\gamma) \right]^2 = \left[ a_m + F_m^+(\Omega,\gamma) \right]^2
- \left[ \gamma b_m + F_m^-(\Omega,\gamma) \right]^2  + \left[ E_m^-(\Omega,\gamma) \right]^2
\end{equation}
where $E_m^{\pm}$ and $F_m^{\pm}$  define the symmetric and anti-symmetric combinations of $E_m$ and $F_m$ respectively, e.g.
$$
E_m^{\pm}(\Omega,\gamma) := \frac{E_m(\Omega,\gamma) \pm E_m(\Omega,-\gamma)}{2}.
$$
Substituting the leading order (\ref{P-leading-order}), we find
\begin{eqnarray*}
E_m(\Omega,\gamma) & = & \sum_{j \in \mathbb{Z} \backslash \{0,2m\}} \frac{a_j^2 - \gamma^2 b_j^2}{m^2 + \Omega - (m-j)^2} + {\rm h.o.t.}, \\
F_m(\Omega,\gamma) & = & \sum_{j \in \mathbb{Z} \backslash \{0,2m\}} \frac{(a_j+\gamma b_j)(a_{j-2m}-\gamma b_{j-2m})}{m^2 + \Omega - (m-j)^2} + {\rm h.o.t.},
\end{eqnarray*}
where the higher-order terms are convolutions of ${\bf a}+\gamma {\bf b}$ and $\tilde{\bf P}$ estimated in (\ref{bounds-higher-order-terms}). Hence, by Young's inequality, ${\rm h.o.t.}$ is bounded in the $\ell^\infty$ norm by $C/m^2$.

Since the leading order of $E_m(\Omega,\gamma)$ is even in $\gamma$, we obtain the estimates:
\begin{eqnarray*}
|E_m^+(\Omega,\gamma)| & \leq & C_1 m^{-1}, \\
|E_m^-(\Omega,\gamma)| & \leq & C_2 |\gamma| m^{-2},
\end{eqnarray*}
for $m$-independent constants $C_1, C_2$, where the factor of $\gamma$ is included for convenience.
On the other hand, the leading order of $F_m(\Omega,\gamma)$ can be estimated as follows:
\begin{eqnarray*}
\left| \sum_{j \in \mathbb{Z} \backslash \{0,2m\}} \frac{a_j a_{j-2m}}{m^2 + \Omega - (m-j)^2} \right| & = &
\left| \frac{a_m^2}{m^2+\Omega} + 2 \sum_{j \geq m+1, j \neq 2m} \frac{a_j a_{j-2m}}{m^2 + \Omega - (m-j)^2} \right| \\
& \leq & C \left( \frac{a_m^2}{m^2} + \frac{ (\sum_{j\in \N} |a_j|^2)^{1/2} (\sum_{j \geq m+1} |a_j|^2)^{1/2}}{m} \right),
\end{eqnarray*}
with similar estimates for the other parts of the leading order of $F_m(\Omega,\gamma)$.
Combining with the higher-order terms and recalling the first assumption in (\ref{assumption-a-b}),
we obtain the estimates
\begin{eqnarray*}
|F_m^+(\Omega,\gamma)| & \leq & C_3 m^{-1} \left(\sum_{j \geq m+1} |b_j|^2 \right)^{1/2} + C_4 m^{-2}, \\
|F_m^-(\Omega,\gamma)| & \leq & C_5 |\gamma| m^{-1} \left(\sum_{j \geq m+1} |b_j|^2\right)^{1/2} + C_6 |\gamma| m^{-2},
\end{eqnarray*}
for $m$-independent constants $C_3, C_4, C_5, C_6$.

The right-hand side of the nonlinear characteristic equation (\ref{roots-Omega})
is
$$
R_m:=b_m^2\left(-\gamma^2 + \frac{a_m^2}{b_m^2}+2\frac{a_mF_m^+}{b_m^2}+\frac{(F_m^+)^2}{b_m^2}-2\frac{\gamma F_m^-}{b_m}
-\frac{(F_m^-)^2}{b_m^2}+\frac{(E_m^-)^2}{b_m^2}\right).
$$
Using the three assumptions in (\ref{assumption-a-b}), we conclude that $R_m = b_m^2 (-\gamma^2 + \delta_m)$,
where $|\delta_m| \to 0$ as $m\to \infty$. Therefore, $R_m < 0$ if $m$ is sufficiently large.

We note next that the roots $\Omega$ of the characteristic equation (\ref{roots-Omega}) are bounded in $m$ as
they are fixed points of
$$\Omega=E_m^+(\Omega,\gamma)\pm \sqrt{R_m(\Omega,\gamma)},$$
where $|E_m^+(\Omega,\gamma)|$ is estimated above and $|\sqrt{R_m(\Omega,\gamma)}|\leq C|b_m|(|\gamma|+\sqrt{|\delta_m|)}$. Hence
$$
|\Omega|\leq C_7 m^{-1}
$$
for some $C_7>0$ independent of $m$. This leads to an estimate on the imaginary part of $E_m^+$. Namely, since
$${\rm Im}(E_m^+)=-{\rm Im}(\Omega)\left(\sum_{j \in \mathbb{Z} \backslash \{0,2m\}} \frac{a_j^2 - \gamma^2 b_j^2}{(m^2 + {\rm Re}(\Omega) - (m-j)^2)^2+{\rm Im}(\Omega)^2} + {\rm h.o.t.}\right),$$
we have
$$|{\rm Im}(E_m^+)| \leq C/m^3.$$
Finally, thanks to the second assumption in \eqref{assumption-a-b},
the imaginary part of the two roots of $\Omega$ is nonzero if $m$ is so large that $|\delta_m|/\gamma^2<1$ because then the following asymptotics hold
$${\rm Im}(\Omega) \sim \pm |\gamma b_m| \sqrt{1-\delta_m/\gamma^2} \quad (m\to \infty).$$
The assertion of the proposition is thus proved.
Note that $\Omega_{\pm}^{(0)}$ given by (\ref{eig-truncated}) may be smaller
than the leading order term for $\Omega$ given by $E_m^+ = \mathcal{O}(m^{-1})$.
\end{proof}

As an example for the assumptions in Proposition \ref{prop-3}, we consider
the periodic potentials $U$ and $W$ such that
$$
\begin{aligned}
&|a_m| = \mathcal{O}(m^{-5/2}) \quad \mbox{\rm for }  m\to \infty, \\
\quad C_- m^{-3/2}\leq &|b_m| \leq C_+ m^{-3/2} \quad \mbox{\rm for all } m \; \mbox{\rm sufficiently large}
\end{aligned}
$$
with some $0 < C_- \leq C_+ < \infty$.
Since $\sum_{j \geq m+1} |b_j|^2 = \mathcal{O}(m^{-2})$, the assumptions in (\ref{assumption-a-b}) are
satisfied and by Proposition \ref{prop-3},
for every $\gamma > 0$, there exists $N$ such that
eigenvalues (\ref{eig-truncated}) are complex for every $m \geq N$.
Moreover, there is a positive constant $C$ such that
$N \geq C \gamma^{-2}$. The latter estimate follows from
$|\delta_m| \leq c(m^{-1} + \gamma^2 m^{-1/2})$ obtained
from the previous estimates on $E_m^{\pm}$ and $F_m^{\pm}$ as well as
the definition of $\delta_m$. If $C$ is sufficiently large in $N \geq C \gamma^{-2}$,
then $|\delta_m|/\gamma^2$ is small for every $m \geq N$.

\end{document}